\newtheorem{theorem}{Theorem}
\newtheorem{lemma}[theorem]{Lemma}
\newtheorem{corollary}[theorem]{Corollary}
\newtheorem{definition}[theorem]{Definition}
\newtheorem{example}[theorem]{Example}
\newcommand{\be}{\begin{enumerate}}
\newcommand{\ee}{\end{enumerate}}
\newcommand{\rn}{{\mathbb N}}
\newcommand{\rr}{{\mathbb R}}
\newcommand{\cz}{{\mathbb Z}}
\newcommand{\ra}{\rightarrow}
\newcommand{\nin}{\noindent}
\newcommand{\gt}{\rightarrow}
\newcommand{\gti}{\gt\infty}
\newcommand{\lni}{\lim_{n\gti}}
\newcommand{\prob}{\textrm{Prob}\,}
\newcommand{\ynpd}{Y_{n,p,d}}
\newcommand{\R}{{\mathbf R}}
\newcommand{\Z}{{\mathbf Z}}
\newcommand{\Q}{\mathbb Q}
\newcommand{\E}{{\mathbb E}}
\renewcommand{\P}{\mathbb P}
\newcommand{\LL}{\mathcal L}
\begin{document}

\title{The homotopical dimension of random 2-complexes}   
\author{Daniel C. Cohen\thanks{Partially supported by Louisiana Board of Regents grant NSF(2010)-PFUND-171.},\,  Michael Farber\thanks{Partially supported by a grant from the EPSRC.}\,  and Thomas Kappeler\thanks{Partially supported by the Swiss National Science Foundation.}}      
\date{\today}  
\maketitle
\abstract{Stochastic algebraic topology aims at studying random or partly known spaces which typically arise in applications as configuration spaces of large systems. In this paper we study the Linial--Meshulam model of random two-dimensional complexes. We prove that if the probability parameter $p$ satisfies $p\ll n^{-1-\epsilon}$, where $\epsilon>0$ is arbitrary and independent of $n$, then a random 2-complex $Y$ is homotopically one dimensional with probability tending to $1$ as $n\to \infty$. More precisely, we show that under this assumption on $p$, the complex $Y$ can be collapsed to a graph in finitely many steps. It is known that the homotopical dimension of $Y$ is equal to $2$ for $p>3 n^{-1}$.}

\section{Introduction}
Since its inception in 1959 by Erd\"os and R\'enyi \cite{ER}, the theory of random graphs has developed into 
a rapidly growing and widely applicable branch of discrete mathematics, bringing together ideas from graph theory, combinatorics, and probability theory. In one model, a random graph is a subgraph $\Gamma$ of a complete graph on $n$ vertices such that every edge of the complete graph 
is included in $\Gamma$ with probability $p$, independently of the other edges.  One is interested in probabilistic features of $\Gamma$ and their dependence on $p$ when $n$ is large. Here $0<p<1$ is a probability parameter which in general may depend on $n$. The theory of random graphs 
\cite{AS, B, JLR} offers many spectacular results and predictions, which  play an essential role in various engineering and computer science applications. Random graphs also serve within mathematics as accessible models for other, more complex random 
structures.  

Higher dimensional analogs of the aforementioned 
Erd\H{o}s--R\'enyi model were recently suggested and studied by Linial--Meshulam in~\cite{LM}, and 
Meshulam--Wallach in~\cite{MW}.
In these models, one generates a random $d$-dimensional simplicial complex $Y$ by considering the full $d$-dimensional skeleton of the simplex 
$\Delta_n$ on vertices $\{1, \dots, n\}$ and retaining $d$-dimen\-sional faces independently with probability $p$.  Note that in this construction $Y$ contains the $(d-1)$-dimensional 
skeleton of $\Delta_n$. 
The work of Linial--Meshulam and
Meshulam--Wallach provides threshold functions for the vanishing of the
$(d-1)$-st homology groups of random complexes with coefficients in a~finite
abelian group. Threshold functions for the vanishing of the $d$-th homology groups were subsequently studied by Kozlov \cite{Ko}.

In this paper, we focus on 2-dimensional random complexes. The corresponding probability space $G(\Delta_n^{(2)}, p)$ of the Linial--Meshulam model is defined as follows. Let $\Delta_n$ denote the $(n-1)$-dimensional simplex 
with vertices $\{1, 2, \dots, n\}$. Then $G(\Delta_n^{(2)}, p)$ denotes the set of all 2-dimensional subcomplexes 
$$\Delta_n^{(1)}\subset Y\subset \Delta_n^{(2)},$$ containing the one-dimensional skeleton $\Delta_n^{(1)}$. The probability function 
$\P: G(\Delta_n^{(2)},p)\to \R$ is given by the formula
$$\P(Y) = p^{f(Y)}(1-p)^{{n\choose 3}-f(Y)}, \quad Y\in G(\Delta_n^{(2)}, p),$$
where $f(Y)$ denotes the number of faces in $Y$.  In other words, each of the 2-dimensional simplexes of $\Delta_n^{(2)}$ is included in a random 2-complex $Y$ with probability $p$, independently of the other 2-simplexes. As in the case of random graphs, $0<p<1$ is a probability parameter which may depend on $n$.  
When $n$ grows, the model $G(\Delta_n^{(2)}, p)$ includes all finite $2$-dimensional complexes containing the full 1-skeleton $\Delta_n^{(1)}$; however, the likelihood of various topological phenomena is dependent on the value of $p$. The theory of deterministic 2-complexes itself is a rich and active field of current research with many challenging open questions, see \cite{Hog}.

The fundamental group of a random 2-complex $Y\in G(\Delta_n^{(2)}, p)$ was investigated by Babson, Hoffman, and Kahle \cite{BHK}.  They showed that for 
$p\gg n^{-1/2}\cdot (3\log n)^{1/2}$, the group $\pi_1(Y)$ vanishes asymptotically almost surely (i.e., the probability that $\pi_1(Y)$ is trivial tends to $1$ as $n\to\infty$). 
For $p\ll n^{-1/2-\epsilon}$, these authors use notions of negative curvature due to Gromov to study the nontriviality and hyperbolicity of $\pi_1(Y)$.

In this paper, we show that for $p\ll n^{-1-\epsilon}$ a random 2-complex $Y$ is homotopically 1-dimensional, a.a.s.\footnote{We use the abbreviation a.a.s.~for the phrase \lq\lq asymptotically almost surely\rq\rq.}
More precisely, we show that $Y$ can be collapsed to a graph in finitely many steps. This implies that $Y$ has a free fundamental group and vanishing 2-dimensional homology. Note that the vanishing of 2-dimensional homology in this range of $p$ also follows from a result of Kozlov \cite{Ko}. In \cite{CFK}, it is shown that for $p>3/n$, the homology group $H_2(Y;\Z)$ is nontrivial with probability tending to $1$; see also \cite{Ko}. Thus, 
for $p>3/n$, the random 2-complex $Y$ is homotopically two-dimensional a.a.s. 

Our main result  is as follows:

\begin{theorem}\label{main} 
  (a) If for some $k\ge 1$ the probability parameter $p$ satisfies\footnote{Recall that the symbol $a_n\ll b_n$ means that $a_n>0$ and $a_n/b_n \to 0$ as $n \to \infty$.}
\[
\label{ass} p\ll n^{-1-\frac{2}{k+1}},
\] then a random 2-complex 
$Y\in G(\Delta_n^{(2)},p)$ is collapsible to a graph in at most $k$ steps, asymptotically almost surely (a.a.s). 
(b) If for some $k\ge 1$ the probability parameter $p$ satisfies 
\[
p\gg n^{-1-\frac{1}{3\cdot 2^{k-1}-1}},
\]
then $Y$ is not collapsible to a graph in $k$ or fewer steps, a.a.s.
\end{theorem}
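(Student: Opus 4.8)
The plan is to translate ``collapsible to a graph in $\le k$ steps'' into a purely combinatorial condition on an iterated core, and then treat the two inequalities by the first- and second-moment methods respectively.

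\textbf{Reformulation.} For a $2$-complex $Z$ let $\Phi(Z)$ be the result of deleting \emph{every} free edge (an edge lying in a unique $2$-simplex) together with the simplex containing it; this is one collapse step. Put $Y^{(0)}=Y$, $Y^{(i)}=\Phi(Y^{(i-1)})$, and let $S_i$ be the set of $2$-simplices of $Y^{(i)}$. One checks directly that $S_{i+1}=\{\sigma\in S_i:\text{each edge of }\sigma\text{ lies in at least two simplices of }S_i\}$, so that $Y$ collapses to a graph in at most $k$ steps if and only if $S_k=\emptyset$. I would record two consequences. First, the recursive survival rule: $\sigma\in S_k$ iff each of its three edges lies in a \emph{second} simplex of $S_{k-1}$; iterating downward exhibits, for any surviving $\sigma_0$, a rooted dependency tree of simplices of depth $k$ in which $\sigma_0$ has three descendants and every other node has two (one of its edges being already supported by its parent). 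Second, monotonicity: if $W\subseteq Z$ then $S_i(W)\subseteq S_i(Z)$, since extra simplices only raise edge-degrees.

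\textbf{Part (a).} I would bound $\mathrm{Prob}(S_k\ne\emptyset)$ by a first-moment estimate over \emph{minimal witnesses}: inclusion-minimal subcomplexes $W$ in which some $\sigma_0$ survives $k$ steps. Minimality forces $W$ to be connected through edge-adjacency, so ordering its $t=t(W)$ faces with each sharing an edge with an earlier one gives $v(W)\le t+2$. Now split into two cases. If $W$ is \emph{closed}, i.e.\ every edge lies in $\ge 2$ simplices, then each vertex link is a graph of minimum degree $\ge 2$, so every vertex lies in at least three $2$-simplices; summing gives $3t\ge 3v$, hence $v/t\le 1$. If $W$ is not closed, then minimality prevents any intermediate core $S_i$ $(i\le k)$ from being closed, so each of the first $k$ collapse steps of $W$ deletes at least one face while $\sigma_0$ survives; thus $t\ge k+1$ and $v/t\le 1+2/t\le 1+\tfrac{2}{k+1}$. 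In both cases $v(W)/t(W)\le 1+\tfrac{2}{k+1}$. Since minimal witnesses have size bounded in terms of $k$ only, there are finitely many isomorphism types, and the expected number of embedded copies of each is $O(n^{v}p^{t})=O(n^{v-(1+2/(k+1))t})$ up to the slack in $p$; when $p\ll n^{-1-2/(k+1)}$ every such term tends to $0$, and a union bound yields $S_k=\emptyset$ a.a.s.

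\textbf{Part (b).} Here I would realize the depth-$k$ dependency tree concretely, with no identifications, as a complex $W_k$ having $t=3\cdot 2^{k}-2$ faces on $v=3\cdot 2^{k}$ vertices whose root survives $k$ steps inside $W_k$. A short computation shows that any subcomplex $W'\subseteq W_k$ with $c$ faces spans at least $c+2$ vertices, so $t(W')/v(W')\le t/v$ with equality only for $W_k$; thus $W_k$ is strictly balanced with density exponent $v/t=1+\tfrac{1}{3\cdot 2^{k-1}-1}$. Consequently, when $p\gg n^{-1-1/(3\cdot 2^{k-1}-1)}$ the expected number of copies of $W_k$ tends to $\infty$, and a second-moment computation---whose cross terms are negligible precisely because $W_k$ is strictly balanced---shows that a copy occurs a.a.s. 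By the monotonicity noted above, the root of an embedded copy then lies in $S_k(Y)$, so $Y$ is not collapsible in $k$ steps, a.a.s.

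\textbf{Where the difficulty lies.} The conceptual work is the reformulation and, for (a), the structural dichotomy: one must be sure that minimality really does force either closedness or the loss of a face at each of the first $k$ steps, and that no ``mixed'' witness (a tree of faces hung on a closed piece) escapes both bounds. For (b) the technical heart is the variance estimate, where strict balancedness of $W_k$ must be used to control the overlaps of two copies; this is routine in the random-graph setting but needs genuinely $2$-dimensional incidence bookkeeping here. The gap between the two exponents is the honest price of these two methods: the crude bound $t\ge k+1$ drives (a), while the explicit tree $W_k$ with $t=3\cdot 2^{k}-2$ drives (b).
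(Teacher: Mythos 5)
Your argument is correct in outline and reaches both exponents by the same two-pronged strategy as the paper (reduce non-collapsibility to containment of one of finitely many small subcomplexes, then apply first/second moment estimates), but the reduction itself is organized genuinely differently, and in a way that buys something. The paper's key lemma (Theorem \ref{thm1} together with Lemma \ref{lm10}) characterizes non-collapsibility in $k$ steps by containment of a forbidden $r$-pseudo-surface from the finite list $\LL_{k,r}$; the finiteness of that list is purchased by first bounding the degree of the random complex (Corollary \ref{cor12}), and the exponent $1+\tfrac{2}{k+1}$ only emerges after the choice $r=\max(2,k)$ at the very end. Your iterated-core reformulation ($S_{i+1}=\{\sigma\in S_i:\text{every edge of }\sigma\text{ lies in two simplices of }S_i\}$) together with the dependency-tree argument bounds the size of an inclusion-minimal witness by $3\cdot 2^{k}-2$ faces with no degree hypothesis at all, so the degree-truncation step disappears; your dichotomy (closed witness $\Rightarrow v/t\le 1$ via vertex links, versus strictly decreasing cores $\Rightarrow t\ge k+1$ and $v\le t+2$ by strong connectedness) then reproduces the paper's density bounds of Lemmas \ref{closed} and \ref{nonclosed}, and your handling of the ``mixed'' case via minimality (a closed intermediate core would itself be a smaller witness) is exactly right. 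Part (b) is essentially identical to the paper: your $W_k$ is the surface $S_k$ of Example \ref{ex1}, your inequality $v(W')\ge f(W')+2$ for subcomplexes is the content of Lemma \ref{star} (balancedness, which is all the second-moment method needs --- strict balancedness is true here but not required), and the monotonicity step at the end matches Lemma \ref{ge}. The two places where your sketch carries real mathematical weight and should be written out are (i) the claim that every witness contains a witness with at most $3\cdot2^{k}-2$ faces (choose, for each simplex assigned level $j$ and each of its unsupported edges, a supporter in $S_{j-1}$, and verify by induction that all level-$j$ picks lie in $S_j$ of the resulting subcomplex --- this replaces the paper's Lemma \ref{lm10}), and (ii) the strong connectedness of minimal witnesses, which is what gives $v\le t+2$.
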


Loosely speaking, Theorem \ref{main} combines with previously known results to suggest that {\it a random 2-complex with vanishing 2-dimensional homology is homotopically one-dimensional. }

Theorem \ref{main} implies:

\begin{corollary}
If for some $k\ge 1$ the probability parameter $p$ satisfies  $$p\ll n^{-1-\frac{2}{k+1}}$$
then the fundamental group $\pi_1(Y)$ of a random 2-complex 
$Y\in G(\Delta_n^{(2)},p)$ is free and $H_2(Y;\Z)=0$, a.a.s.
\end{corollary}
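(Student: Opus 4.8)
The plan is to derive the Corollary directly from part~(a) of Theorem~\ref{main}: collapsibility to a graph is a combinatorial condition whose topological consequences are immediate once one records that elementary collapses preserve homotopy type. First I would invoke Theorem~\ref{main}(a): under the hypothesis $p\ll n^{-1-\frac{2}{k+1}}$, a.a.s.\ there is a finite sequence of elementary collapses reducing $Y$ to a $1$-dimensional subcomplex $\Gamma\subset Y$. (For the Corollary only the existence of a finite collapse sequence is needed; the sharper bound of $k$ steps is irrelevant.) Each elementary collapse---removal of a free face together with the unique simplex containing it---is a deformation retraction and hence a homotopy equivalence, so composing the finitely many collapses yields a homotopy equivalence $Y\simeq\Gamma$.

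Next I would use that $Y$ contains the full $1$-skeleton $\Delta_n^{(1)}$ and is therefore connected; since collapses preserve connectedness, $\Gamma$ is a connected graph. A connected graph is homotopy equivalent to a wedge of circles (contract a spanning tree), so $\pi_1(\Gamma)$ is free, and by homotopy invariance $\pi_1(Y)\cong\pi_1(\Gamma)$ is free a.a.s. Moreover $\Gamma$ is $1$-dimensional, so $H_2(\Gamma;\Z)=0$, whence homotopy invariance gives $H_2(Y;\Z)\cong H_2(\Gamma;\Z)=0$ a.a.s.

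There is no genuine obstacle here: all the content resides in Theorem~\ref{main}, and the Corollary is merely a translation of its combinatorial conclusion into the language of fundamental groups and homology. The only points deserving a moment's care are to record explicitly that ``collapsible to a graph'' denotes an actual sequence of elementary collapses---so that a homotopy equivalence, rather than a mere coincidence of invariants, is available---and to note the connectedness of $Y$, which licenses the standard freeness of the fundamental group of a graph.
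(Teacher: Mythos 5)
Your proposal is correct and is essentially the argument the paper intends: the corollary is stated as an immediate consequence of Theorem \ref{main}(a), using exactly the observation (made in the paper when simplicial collapse is defined) that each collapse is a deformation retraction, so $Y$ is homotopy equivalent to a connected graph, which has free fundamental group and trivial $H_2$. No substantive difference from the paper's route.
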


The proof of Theorem \ref{main} is given at the very end of the paper. A key role is played by Theorem \ref{thm1}, 
which states that there exists a finite list of forbidden 2-complexes $\LL_{k,r}$ with $k\ge 0$ and $r\ge 2$, 
such that an arbitrary 2-complex of degree at most $r$ (see below) is collapsible to a graph in $k$ steps if and only if it does not contain any of the 2-complexes from $\LL_{k,r}$. This allows us to reduce the collapsiblity problem to the containment problem for random complexes which was studied in \cite{CFK}.

\subsection*{Acknowledgments}
This research was implemented during visits of M. Farber to FIM ETH Z\"urich and Louisiana State University, and a visit of D. Cohen to the University of Z\"urich.  Portions of this work were carried out during the Spring of 2010, when the first two authors participated in the Mathematisches Forschungsinstitut Oberwolfach Research in Pairs program. 
  We thank the FIM ETH, LSU, the University of Z\"urich, and the MFO for their support and hospitality, and for providing productive mathematical environments.

\section{Collapsibility of a 2-complex to a graph}  

\subsection{Basic definitions}

Let $Y$ be a finite 2-dimensional simplicial complex. An edge of $Y$ is called {\it free} if it is included in exactly one 2-simplex.

The {\it boundary} $\partial Y$ is defined as the union of free edges.  We say that a 2-complex $Y$ is {\it closed} if $\partial Y=\emptyset$. 

A $2$-complex 
$Y$ is called {\it pure} if every maximal simplex is 2-dimensional. By the {\it pure part} of a 2-complex we mean the maximal pure subcomplex, i.e. the union of all 2-simplexes.

Let $Y$ be a simplicial 2-complex and let $\sigma$ and $\tau$ be two 2-simplexes of $Y$. 
We say that $\sigma$ and $\tau$ are adjacent if they intersect in an edge.
The {\it distance} between $\sigma$ and $\tau$, $d_Y(\sigma, \tau)$, is the minimal integer $k$ such that there exists a sequence of 2-simplexes $\sigma=\sigma_0, \sigma_1, \dots, \sigma_k=\tau$ with the property that $\sigma_i$ is adjacent to $\sigma_{i+1}$ for every $0\le i<k$. (If no such sequence exists then $d_Y(\sigma, \tau)=\infty$.) The {\it diameter }
${\rm {diam}}(Y)$ is defined as the maximal value of $d_Y(\sigma, \tau)$ taken over pairs of 2-simplexes of $Y$. 

A simplicial 2-complex is {\it strongly connected} if it has a finite diameter. 

A simplicial 2-complex  has {\it degree $\le r$} if every edge is incident to at most $r$ 2-simplexes.

A {\it pseudo-surface} is a finite, pure, strongly connected 2-dimensional simplicial complex of degree at most $2$ (i.e., every edge is included in at most two 2-simplexes). 

More generally, for an integer $r>0$, an {\it $r$-pseudo-surface} is a finite, pure, strongly connected 2-dimensional simplicial complex of degree at most $r$. 

\subsection{Simplicial collapse} Let $Y$ be a 2-complex. A 2-simplex of $Y$ is called {\it free} if at least one of its edges is free. 
Let $\sigma_1, \dots, \sigma_k$ be all free 2-simplexes in $Y$, and let 
$e_1, \dots, e_k$ be free edges with $e_i\subset \sigma_i$. We say that the complex  
$$Y'={Y-\cup_{i=1}^k {\rm {int}}(\sigma_i) - \cup_{i=1}^k {\rm {int}}(e_i)}$$ 
is obtained from $Y$ by collapsing all free 2-simplexes. Clearly $Y'\subset Y$ is a deformation retract. The operation $Y\searrow Y'$ is called a {\it simplicial collapse}. 
Note that $Y'$ is not uniquely determined if one of the free simplexes of $Y$ has two free edges; however the pure part of $Y'$ (i.e. the union of 2-simplexes of $Y'$) is uniquely determined. 

This process can be iterated $Y'\searrow Y''$, $Y''\searrow Y'''$, etc. We denote $Y=Y^{(0)}$, $Y'=Y^{(1)}$, $Y''=Y^{(2)}$ etc. 
The sequence of subcomplexes $Y^{(0)}\supset Y^{(1)}\supset Y^{(2)}\supset \dots$ is decreasing and there are two possibilities: either (a)  for some $k$, the complex $Y^{(k)}$ is one-dimensional (a graph), or (b) for some $k$, the complex $Y^{(k)}$ is 2-dimensional and closed, i.e., $\partial Y^{(k)}=\emptyset$. 

\begin{definition} We say that $Y$ is collapsible to a graph in at most $k$ steps if $Y^{(k)}$ is a graph. 
We say that $Y$ is collapsible to a graph in $k$ steps if $Y^{(k)}$ is a graph and $\dim Y^{(k-1)}=2$. 
\end{definition}


Observe that {\it if $Y$ is collapsible to a graph in at most $k$ steps then any simplicial subcomplex $S\subset Y$ is also collapsible to a graph in at most $k$ steps}. 
At each step one removes the free triangles in $Y^{(i)}$ which belong to $S$. 

Let $Y$ be a 2-complex, and consider the sequence of collapses
\[
Y^{(0)}\searrow Y^{(1)}\searrow Y^{(2)}\searrow\dots \searrow Y^{(k)} \searrow \dots.
\] 
For a 2-simplex $\sigma\in Y$ define $$ D_Y(\sigma) = \sup \{i; \, \sigma\subset Y^{(i)}\}\, \, \in \{0, 1, \dots, \infty\}.$$ 
A 2-simplex $\sigma$ is free if and only if $D_Y(\sigma)=0$. 

A 2-complex $Y$ is collapsible to a graph in at most $k+1$ steps if and only if $D_Y(\sigma)\le  k$ for any $2$-simplex $\sigma$. 
If after performing several collapses $Y^{(0)}\searrow Y^{(1)}\searrow Y^{(2)}\searrow\dots $ 
we obtain a subcomplex $Y^{(r)}\subset Y$ with empty boundary $\partial Y^{(r)}=\emptyset$, 
then $Y^{(r)}=Y^{(r+1)}=Y^{(r+2)}=\dots$ and $D_Y(\sigma)=\infty$ for any simplex $\sigma$ in $Y^{(r)}$.

\begin{lemma} Let $\sigma$ be a 2-simplex with $D_Y(\sigma)=k$ where $0<k<\infty$. Then one of the edges $e$ of $\sigma$ has the following property: 
for any 2-simplex $\sigma'$ of $Y$ which is incident to $e$ and distinct from $\sigma$ one has $D_Y(\sigma')<k$ and there exists a 2-simplex $\sigma'$ incident to $e$ and distinct from 
$\sigma$ such that $D_Y(\sigma')=k-1$. 
\end{lemma}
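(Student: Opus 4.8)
The plan is to translate the hypothesis $D_Y(\sigma)=k$ into the concrete statement that $\sigma$ is a free $2$-simplex of $Y^{(k)}$, and then read off both assertions directly from the definition of a free edge. First I would record the basic dictionary between the invariant $D_Y$ and membership in the collapse sequence: for any $2$-simplex $\tau$ one has $\tau\subset Y^{(i)}$ if and only if $D_Y(\tau)\ge i$, and moreover every edge of a surviving $2$-simplex survives (if $\tau\subset Y^{(i)}$ then each face of $\tau$, in particular each of its edges, lies in $Y^{(i)}$, since $Y^{(i)}$ is a subcomplex). Since $0<D_Y(\sigma)=k<\infty$, the simplex $\sigma$ lies in $Y^{(k)}$ but is removed in passing to $Y^{(k+1)}$; hence $\sigma$ is a free $2$-simplex of $Y^{(k)}$ and therefore has at least one free edge $e$ in $Y^{(k)}$. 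This $e$ is the edge claimed by the lemma.

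For the first property I would use that $e$ being free in $Y^{(k)}$ means $e$ is contained in exactly one $2$-simplex of $Y^{(k)}$, namely $\sigma$. Consequently any $2$-simplex $\sigma'$ of $Y$ that is incident to $e$ and distinct from $\sigma$ cannot belong to $Y^{(k)}$, for otherwise $e$ would meet two distinct $2$-simplexes of $Y^{(k)}$. By the dictionary this gives $D_Y(\sigma')<k$, as required.

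For the second (existence) property I would argue by contradiction. Suppose every $2$-simplex $\sigma'$ incident to $e$ and distinct from $\sigma$ satisfies $D_Y(\sigma')<k-1$; then none of these $\sigma'$ lies in $Y^{(k-1)}$. On the other hand $\sigma\subset Y^{(k)}\subset Y^{(k-1)}$, so $e\subset\sigma\subset Y^{(k-1)}$ and $e$ is present in $Y^{(k-1)}$. Thus in $Y^{(k-1)}$ the edge $e$ is contained in exactly one $2$-simplex, namely $\sigma$, so $e$ is already free in $Y^{(k-1)}$. But then $\sigma$ is a free $2$-simplex of $Y^{(k-1)}$ and is removed in the collapse $Y^{(k-1)}\searrow Y^{(k)}$, forcing $D_Y(\sigma)=k-1$ and contradicting $D_Y(\sigma)=k$. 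Hence some $\sigma'$ incident to $e$ and distinct from $\sigma$ must have $D_Y(\sigma')\ge k-1$; combined with the first property ($D_Y(\sigma')<k$) this yields $D_Y(\sigma')=k-1$.

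The argument is essentially a careful unwinding of the definitions of \emph{free edge}, \emph{free $2$-simplex}, and the collapse sequence, so I do not expect a genuine obstacle. The only point demanding care is the non-uniqueness of the collapse sequence noted just after the definition of simplicial collapse: I would address this by observing that although $Y^{(i)}$ itself may depend on the choices made, its pure part is uniquely determined, and for a $2$-simplex $\tau$ the condition $\tau\subset Y^{(i)}$ depends only on that pure part; hence $D_Y$, and with it the entire argument, is independent of the chosen sequence.
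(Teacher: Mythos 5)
Your proof is correct and follows essentially the same route as the paper's: identify a free edge $e$ of $\sigma$ in $Y^{(k)}$, note that freeness of $e$ forces every other incident $2$-simplex out of $Y^{(k)}$ (hence $D_Y(\sigma')<k$), and observe that if all of them had disappeared before step $k-1$ then $\sigma$ would have become free earlier, contradicting $D_Y(\sigma)=k$. Your version merely spells out the dictionary $\tau\subset Y^{(i)}\iff D_Y(\tau)\ge i$ and the well-definedness of $D_Y$ more explicitly than the paper does.
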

\begin{proof}
Since $D_Y(\sigma)=k$, we know that after $k$ collapses an edge $e$ of $\sigma$ becomes free. All other simplexes $\sigma'$ of $Y$ incident to $e$ must have been eliminated in previous steps, i.e., they satisfy $D_Y(\sigma')<k$. At least one of these simplexes $\sigma'$ must have been eliminated in step $k-1$ since otherwise $\sigma$ would have become free earlier. 
\end{proof}

\begin{lemma}\label{ge}
If $Z\subset Y$ is a subcomplex 
and $\sigma\subset Z$ is a $2$-simplex, then 
$$D_Z(\sigma) \le D_Y(\sigma).$$
\end{lemma}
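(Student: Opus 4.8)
The plan is to deduce the inequality from a monotonicity property of the collapsing process under passage to subcomplexes. Concretely, I would prove by induction on $i\ge 0$ that every 2-simplex appearing in $Z^{(i)}$ also appears in $Y^{(i)}$; equivalently, that the pure part of $Z^{(i)}$ is a subcomplex of the pure part of $Y^{(i)}$. Granting this, the conclusion is immediate: since $\sigma\subset Z^{(i)}$ forces $\sigma\subset Y^{(i)}$, the index set $\{i : \sigma\subset Z^{(i)}\}$ is contained in $\{i : \sigma\subset Y^{(i)}\}$, and taking suprema gives $D_Z(\sigma)\le D_Y(\sigma)$. Recall that although the collapse $Y\searrow Y'$ is not uniquely determined, its pure part is, so the quantities $D_Z(\sigma)$, $D_Y(\sigma)$ and the sets of surviving 2-simplexes are all well defined.

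For the base case $i=0$, every 2-simplex of $Z=Z^{(0)}$ is a 2-simplex of $Y=Y^{(0)}$, simply because $Z\subset Y$. For the inductive step, I would use the key observation that the pure part produced by a single collapse consists exactly of those 2-simplexes that are \emph{not} free: removing a free edge $e\subset\sigma$ deletes no other 2-simplex, since by definition $e$ is incident to the unique 2-simplex $\sigma$. Thus a 2-simplex $\sigma$ survives the step from $Z^{(i)}$ to $Z^{(i+1)}$ precisely when $\sigma$ lies in $Z^{(i)}$ and none of its three edges is free in $Z^{(i)}$, that is, when each edge of $\sigma$ is incident to at least two 2-simplexes of $Z^{(i)}$.

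Now let $\sigma$ be a 2-simplex of $Z^{(i+1)}$. By the inductive hypothesis $\sigma$ is a 2-simplex of $Y^{(i)}$, and moreover every edge $e$ of $\sigma$ is incident to at least two 2-simplexes of $Z^{(i)}$. Since, again by the inductive hypothesis, each such 2-simplex of $Z^{(i)}$ is also a 2-simplex of $Y^{(i)}$, the edge $e$ is incident to at least two 2-simplexes of $Y^{(i)}$ as well. Hence no edge of $\sigma$ is free in $Y^{(i)}$, so $\sigma$ survives into $Y^{(i+1)}$. This completes the induction. The only point I would be careful to state precisely is the direction of the incidence inequality: passing to the subcomplex $Z$ can only decrease the number of 2-simplexes incident to a given edge, so edges become free \emph{sooner} in $Z$ than in $Y$, and this is exactly what drives the inequality $D_Z(\sigma)\le D_Y(\sigma)$.
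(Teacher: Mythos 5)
Your proof is correct and follows essentially the same route as the paper's: the paper likewise observes that a 2-simplex not free in $Z$ cannot be free in $Y$ (since each of its edges meets at least as many 2-simplexes in $Y$ as in $Z$), concludes $Z^{(i)}\subset Y^{(i)}$ for all $i$, and compares the suprema. You have merely spelled out the induction and the well-definedness of the pure parts more explicitly than the paper does.
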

\begin{proof}
If a 2-simplex belongs to $Z$ and is not free in $Z$ then it is not free in $Y$. This implies that $Z'\subset Y'$ and therefore $Z^{(i)}\subset Y^{(i)}$ for any $i\ge 1$. 
Thus, the maximal $i$ such that $\sigma$ is contained in $Z^{(i)}$ is less than or equal to the maximal $i$ such that $\sigma$ is contained in $Y$, which implies the statement of the Lemma.
\end{proof}

\subsection{$\sigma$-accessible boundary}

\begin{definition}
Let $Y$ be a 2-complex and let $\sigma, \tau$ be two 2-simplexes of $Y$ with $D_Y(\tau)=0$ and $D_Y(\sigma)=k\ge 1$.
 A {\it collapsing path} from $\tau$ to $\sigma$ is a sequence of 2-simplexes $\tau=\sigma_0, \sigma_1, \dots, \sigma_{k-1}, \sigma_k=\sigma$ 
such that $D_Y(\sigma_i)=i$ and each pair $\sigma_i$ and $\sigma_{i+1}$ has a common edge, where $i=0, \dots, k-1$.
\end{definition}

In a collapsing path, the initial simplex $\sigma_0=\tau$ is a free simplex, and hence at least one of its edges belongs to the boundary $\partial Y$. 

\begin{definition}\label{defaccess} Given a 2-simplex $\sigma$, we denote by $A_Y(\sigma)\subset \partial Y$ the union of the edges in $\sigma_0\cap \partial Y$ 
which can appear in a collapsing path  $\sigma_0, \sigma_1, \dots, \sigma_k$  ending at $\sigma$. We call $A_Y(\sigma)$ the $\sigma$-accessible part of the boundary.
\end{definition}
In Definition \ref{defaccess}, clearly $k=D_Y(\sigma)$.
Note that $A_Y(\sigma) \not=\emptyset$ if and only if $D_Y(\sigma)<\infty$. 

\begin{definition} Let $\sigma$ be a 2-simplex of $Y$ with $D_Y(\sigma) \ge 1$. For an edge $e$ of $\sigma$ define 
$$A_Y(\sigma, e)\subset A_Y(\sigma)$$ as the set of all edges $e'$ of the boundary 
$\partial Y$ with the property that there exists a collapsing path $\sigma_0, \sigma_1, \dots, \sigma_k=\sigma$ such that $e'$ is an edge of $\sigma_0$ and $e=\sigma_{k-1}\cap \sigma_k$.
\end{definition}

If $e_1, e_2, e_3$ are the edges of $\sigma$ then $A_Y(\sigma)= \cup_{i=1}^3 A_Y(\sigma, e_i)$ and the sets $A_Y(\sigma,e_i)$ need not be mutually disjoint.

\begin{lemma} \label{dadj} Let $\sigma$ and $\sigma'$ be adjacent 2-simplexes of $Z$ with $$D_Z(\sigma)= D_Z(\sigma')+1.$$ 
Assume that any collapsing path in $Z$ ending at $\sigma$ passes through the edge
$e=\sigma\cap \sigma'$. 
If $Z$ is embedded as a subcomplex $Z\subset Y$ and $$D_Z(\sigma')<D_Y(\sigma'),$$ then 
$$D_Z(\sigma)< D_Y(\sigma).$$
\end{lemma}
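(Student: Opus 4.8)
The plan is to reduce the desired strict inequality to a single checkable assertion via Lemma \ref{ge}, and then verify that assertion one edge at a time. Set $k = D_Z(\sigma)$, so that $D_Z(\sigma') = k-1$. Lemma \ref{ge} already yields $D_Y(\sigma) \ge k$, hence $\sigma \subset Y^{(k)}$; it therefore suffices to prove that $\sigma$ is not free in $Y^{(k)}$, for then $\sigma \subset Y^{(k+1)}$ and $D_Y(\sigma) \ge k+1 > k = D_Z(\sigma)$. Writing $e = \sigma \cap \sigma'$ and $f, g$ for the other two edges of $\sigma$, my goal becomes to show that none of $e, f, g$ is a free edge of $\sigma$ in $Y^{(k)}$.

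First I would analyse the collapse inside $Z$. Since $D_Z(\sigma) = k$, the simplex $\sigma$ lies in $Z^{(k)}$ and is free there, but it is not free in any $Z^{(j)}$ with $j \le k-1$; hence no edge of $\sigma$ becomes free before step $k$. The key observation is that an edge $e'$ of $\sigma$ which is free in $Z^{(k)}$ must become free exactly at step $k$, so the last 2-simplex other than $\sigma$ incident to $e'$ is removed in the passage $Z^{(k-1)} \searrow Z^{(k)}$ and thus has $D_Z$-value $k-1$; calling it $\rho$ and prepending a collapsing path that realises $D_Z(\rho) = k-1$ (such a path exists by downward induction on the $D_Z$-value, using the first Lemma of this subsection), one obtains a collapsing path in $Z$ ending at $\sigma$ whose final edge is $e'$. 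By hypothesis every such path passes through $e$, forcing $e' = e$. Consequently $e$ is the only edge of $\sigma$ free in $Z^{(k)}$, so $f$ and $g$ are each incident in $Z^{(k)}$ to a further 2-simplex $\tau_f, \tau_g \ne \sigma$, whence $D_Z(\tau_f) \ge k$ and $D_Z(\tau_g) \ge k$.

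Next I would push this across the inclusion $Z \subset Y$. Lemma \ref{ge} gives $D_Y(\tau_f) \ge D_Z(\tau_f) \ge k$ and $D_Y(\tau_g) \ge k$, so $\tau_f, \tau_g \subset Y^{(k)}$ and the edges $f, g$ are not free in $Y^{(k)}$. For the edge $e$, the hypothesis $D_Z(\sigma') < D_Y(\sigma')$ gives $D_Y(\sigma') \ge k$, so $\sigma' \subset Y^{(k)}$; as $\sigma$ and $\sigma'$ are distinct and both meet $e$, the edge $e$ is not free in $Y^{(k)}$ either. Hence no edge of $\sigma$ is free in $Y^{(k)}$, which is exactly the assertion I reduced to, and the proof is complete.

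I expect the genuine obstacle to be the middle paragraph: making precise the correspondence between the free edges of $\sigma$ in $Z^{(k)}$ and the final edges of collapsing paths ending at $\sigma$. This needs the auxiliary fact that every 2-simplex of finite $D_Z$-value admits a collapsing path realising that value, and it requires some care because the collapse $Z^{(k-1)} \searrow Z^{(k)}$ is not uniquely determined when a free simplex has two free edges. Once this correspondence is in place, the hypothesis on $e$ applies verbatim and the remaining steps are routine bookkeeping with Lemma \ref{ge}.
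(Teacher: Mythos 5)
Your proof is correct and follows essentially the same route as the paper's: both arguments reduce the claim to showing that none of the three edges of $\sigma$ is free at stage $D_Z(\sigma)$ of the collapse of $Y$, handling $e$ via the survival of $\sigma'$ and the other two edges via the collapsing-path hypothesis together with Lemma \ref{ge}. Your middle paragraph merely spells out, correctly, the step the paper dispatches with ``By assumption, the two edges of $\sigma$ different from $e$ are not free in $Z^{(k+1)}$.''
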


\begin{proof}  
Let $k=D_Z(\sigma') =  D_Z(\sigma) - 1$. We must show that $D_Y(\sigma) \ge k+2.$ First we claim that the edge $e$ may become free only after at least $k+2$ collapses in $Y$. Assume it is free in $Y$ after $k+1$ collapses. By assumption, $D_Y(\sigma') \ge k+1.$ Hence the edge $e$ can only be free after $k+1$ collapses in $Y$ if $\sigma$ has been removed already before, i.e., $D_Y(\sigma) \le k.$ On the other hand, by Lemma \ref{ge}, 
$D_Y(\sigma) \ge D_Z(\sigma) = k + 1$ which leads to a contradiction. 

By assumption, the two edges of $\sigma$ different from $e$ are not free in $Z^{(k+1)}$ and hence they are not free in $Y^{(k+1)}$. Thus $D_Y(\sigma) \ge k + 2$ as claimed.
\end{proof}


Note that the assumption of Lemma \ref{dadj} that any collapsing path in $Z$ ending at $\sigma$ passes through the edge $e$ is equivalent to $A_Z(\sigma, e')=\emptyset$ for the two remaining edges $e'\not=e$ of $\sigma$.

\begin{lemma}\label{lm8} Let $Z\subset Y$ be a subcomplex. If $D_Z(\sigma) = D_Y(\sigma)$ for a 2-simplex $\sigma$ of $Z$ then there is an edge $e$ of $\sigma$ such that 
$$\emptyset \not= A_Z({\sigma, e}) \subset A_Y({\sigma, e})\subset \partial Y.$$
\end{lemma}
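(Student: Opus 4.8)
The plan is to argue by induction on $k=D_Z(\sigma)=D_Y(\sigma)$, the interesting range being $1\le k<\infty$ (for $k=0$ the set $A_Z(\sigma,e)$ is not defined, and $k=\infty$ is excluded since then $A_Z(\sigma)=\emptyset$). The first task is to pin down the edge $e$. Since $D_Y(\sigma)=k$, the simplex $\sigma$ becomes free in $Y^{(k)}$, and by the earlier lemma identifying a distinguished freeing edge there is an edge $e$ of $\sigma$ that is free in $Y^{(k)}$; thus every $2$-simplex of $Y$ incident to $e$ other than $\sigma$ has $D_Y<k$, and at least one has $D_Y=k-1$. Using $Z^{(k)}\subseteq Y^{(k)}$ from Lemma \ref{ge} I would first check that this same $e$ is free in $Z^{(k)}$: the only $2$-simplex of $Y^{(k)}$ meeting $e$ is $\sigma$, hence the only one of $Z^{(k)}$ meeting $e$ is $\sigma$ as well. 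Since $e\notin\partial Z$ (otherwise $D_Z(\sigma)=0$), the edge $e$ fails to be free in $Z^{(k-1)}$, so there is a $2$-simplex $\sigma'$ of $Z$ incident to $e$ with $D_Z(\sigma')=k-1$; this already gives $A_Z(\sigma,e)\neq\emptyset$.

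Next I would record the recursive structure of the accessible boundary. Truncating a collapsing path one step before $\sigma$ shows
\[
A_Z(\sigma,e)=\bigcup_{\sigma'} A_Z(\sigma'),
\]
the union taken over all $2$-simplexes $\sigma'$ of $Z$ incident to $e$, distinct from $\sigma$, with $D_Z(\sigma')=k-1$, and the analogous identity holds in $Y$. For any such $\sigma'$, Lemma \ref{ge} gives $D_Y(\sigma')\ge k-1$, while freeness of $e$ in $Y^{(k)}$ forces $D_Y(\sigma')<k$; hence $D_Y(\sigma')=k-1$ and $\sigma'$ is an admissible predecessor of $\sigma$ in $Y$ as well, so that $A_Y(\sigma')\subseteq A_Y(\sigma,e)$. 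It therefore suffices to transfer each $A_Z(\sigma')$ into $A_Y$.

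The heart of the argument, and the step I expect to be the main obstacle, is precisely this transfer. Applying the inductive hypothesis to $\sigma'$ (which satisfies $D_Z(\sigma')=D_Y(\sigma')=k-1$) yields one edge $f$ with $\emptyset\neq A_Z(\sigma',f)\subseteq A_Y(\sigma',f)$, whereas the recursion above involves the \emph{full} accessible boundary $A_Z(\sigma')$, not merely its part through a single edge. To close this gap I would lift each $Z$-collapsing path ending at $\sigma$ through $e$ to a $Y$-collapsing path with the same initial boundary edge, by propagating the equality $D_Z=D_Y$ backwards along the path. The engine for this propagation is Lemma \ref{dadj}: whenever the entry into a simplex along the path is forced through the connecting edge---equivalently, when the accessible boundary through the other two edges is empty, as in the remark following Lemma \ref{dadj}---equality of the depths is inherited by the predecessor, and the path, together with the free edge of its initial simplex, survives into $Y$. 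The delicate point is to choose $e$ and to organize the descent so that at each stage the relevant entry is forced and the initial boundary edges of the lifted paths remain free in $Y$; controlling this boundary-persistence is where the real work lies, and is exactly what the forced-entry hypothesis of Lemma \ref{dadj} is designed to supply.
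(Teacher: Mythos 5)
Your setup is sound through the identification of the edge $e$ (the one free in $Y^{(k)}$, hence in $Z^{(k)}$ by Lemma \ref{ge}) and the nonemptiness of $A_Z(\sigma,e)$, and the decomposition $A_Z(\sigma,e)=\bigcup_{\sigma'}A_Z(\sigma')$ over predecessors through $e$ is correct. But the induction you have chosen --- descending along the collapsing path to the predecessor simplexes $\sigma'$ inside the \emph{fixed} pair $Z\subset Y$ --- cannot be closed with the lemma as stated, and you have correctly located the obstruction yourself: the inductive hypothesis applied to $\sigma'$ only delivers \emph{one} edge $f$ with $A_Z(\sigma',f)\subset A_Y(\sigma',f)$, whereas your decomposition needs \emph{all} of $A_Z(\sigma')=\bigcup_f A_Z(\sigma',f)$ to transfer. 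Strengthening the statement to "every edge $f$ with $A_Z(\sigma',f)\neq\emptyset$ works" is not an option --- that is genuinely false in general, since a predecessor of $\sigma'$ through some other edge $f$ may have strictly larger depth in $Y$ than in $Z$, killing the path. Your proposed rescue via Lemma \ref{dadj} does not apply either, because its forced-entry hypothesis ($A_Z(\sigma',f')=\emptyset$ for the other two edges) is exactly what fails in the problematic configurations. So the "heart of the argument" you flag is a real gap, not a routine verification.

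The paper escapes this by running the induction along a different axis: it first reduces to the case where $Y$ is obtained from $Z$ by attaching a single $2$-simplex, and then inducts on the \emph{collapse filtration}, applying the lemma to the \emph{same} simplex $\sigma$ inside the pair $Z^{(1)}\subset Y^{(1)}$, where $D_{Z^{(1)}}(\sigma)=D_{Y^{(1)}}(\sigma)=k-1$ and the same distinguished edge $e$ is still the one that frees $\sigma$. The inductive hypothesis then hands you $\emptyset\neq A_{Z^{(1)}}(\sigma,e)\subset A_{Y^{(1)}}(\sigma,e)\subset\partial Y^{(1)}$ for the full accessible boundary through $e$, and all that remains is to prepend a single initial free triangle $\tau$ of $Z$ to each such path. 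This one-step extension is controlled precisely by the inclusion you already have: the free edges of the path's first simplex in $Z^{(1)}$ lie in $A_{Z^{(1)}}(\sigma,e)\subset\partial Y^{(1)}$, hence are free in $Y^{(1)}$, which forces $\tau$ to be free in $Y$ as well. In short: induct on the number of collapses with $\sigma$ held fixed, not on the position along the collapsing path --- that is the missing idea.
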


\begin{proof}
Without loss of generality, we may assume that $Y$ is obtained from $Z$ by attaching a single $2$-simplex.

The proof is by induction on $k=D_Y(\sigma)=D_Z(\sigma)$.

In the case $k=0$, there is an edge $e$ of $\sigma$ that is free in both $Z$ and $Y$.  In particular, $e\subset\partial Y$.

We include the case $k=1$.  Recall that $Z'=Z^{(1)}$ denotes the result of the first collapse of $Z$, $Z\searrow Z'$.  
Since $D_Z(\sigma)=D_Y(\sigma)=1$, there is an edge $e$ of $\sigma$ that is free in $Y'$ and hence in  $Z'$.  
Then every collapsing path $\tau,\sigma$ in $Z$ with $e=\tau\cap\sigma$ is also a collapsing path in $Y$. Hence 
$A_Z({\sigma, e}) \subset A_Y({\sigma, e})$.

For the general case, assume that $D_Y(\sigma)=D_Z(\sigma)=k$.  After $k$ collapses
\[
Z\searrow Z^{(1)}\searrow \dots \searrow Z^{(k)},\quad
Y\searrow Y^{(1)}\searrow \dots \searrow Y^{(k)},
\]
the $2$-simplex $\sigma$ is exposed in both $Z^{(k)}$ and $Y^{(k)}$.  Thus, $\sigma$ has a free edge $e$
 in $Y^{(k)}$ (and hence in $Z^{(k)}$ as well).  Writing $Z'=Z^{(1)}$ and $Y'=Y^{(1)}$, by induction, we have
 $\emptyset\not=A_{Z'}({\sigma,e}) \subset A_{Y'}({\sigma,e})$ so that any collapsing path $\sigma_1,\dots,\sigma_k$ from  $\sigma_1=\sigma' \subset A_{Z'}({\sigma,e})$ to 
$\sigma_k=\sigma$ in $Z'$ is also a collapsing path in $Y'$.  
 Note in particular that every edge of $\sigma'$ that is free in $Z'$ is also free in $Y'$.  Consequently, for every free triangle $\tau$ in $Z$ which meets $\sigma'$ in 
an edge free in $Z'$, the collapsing path
 $\tau=\sigma_0,\sigma_1,\dots,\sigma_k$ in $Z$ is a collapsing path in $Y$.  The result follows.
\end{proof}

\begin{corollary}\label{corgood}
Let $Z\subset Y$ be 2-complexes such that for a 2-simplex $\sigma$ of $Z$ none of the edges $e\in A_Z(\sigma)\subset \partial Z$ is free in $Y$. Then
$$ D_Z(\sigma)+1\le D_Y(\sigma).$$
\end{corollary}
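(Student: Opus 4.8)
The plan is to argue by contradiction, using Lemma~\ref{lm8} as the main engine while Lemma~\ref{ge} pins down the inequality from the other side. Since $Z\subset Y$, Lemma~\ref{ge} already gives $D_Z(\sigma)\le D_Y(\sigma)$, so it suffices to rule out equality: the task reduces to showing that the hypothesis forces $D_Z(\sigma)<D_Y(\sigma)$, which is precisely $D_Z(\sigma)+1\le D_Y(\sigma)$. I would first dispose of the degenerate case $D_Z(\sigma)=\infty$: there $A_Z(\sigma)=\emptyset$, so the hypothesis is vacuous, and Lemma~\ref{ge} yields $D_Y(\sigma)=\infty$ as well, so the claim holds in the extended sense. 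Thus I may assume $D_Z(\sigma)<\infty$.

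Next I would suppose toward a contradiction that $D_Z(\sigma)=D_Y(\sigma)$, a finite common value. Then Lemma~\ref{lm8} applies and produces an edge $e$ of $\sigma$ with $\emptyset\ne A_Z(\sigma,e)\subset A_Y(\sigma,e)\subset\partial Y$. I would then pick any edge $e'\in A_Z(\sigma,e)$. Because $A_Z(\sigma,e)\subset A_Z(\sigma)$ (this inclusion comes from $A_Z(\sigma)=\bigcup_{i=1}^3 A_Z(\sigma,e_i)$) and because $A_Z(\sigma,e)\subset\partial Y$, this single edge $e'$ lies simultaneously in $A_Z(\sigma)$ and in $\partial Y$. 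Since $\partial Y$ is by definition the union of the free edges of $Y$, the edge $e'$ is free in $Y$, which directly contradicts the hypothesis that no edge of $A_Z(\sigma)$ is free in $Y$. Hence equality is impossible, and combined with $D_Z(\sigma)\le D_Y(\sigma)$ this gives $D_Z(\sigma)+1\le D_Y(\sigma)$.

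The only genuinely substantive step is invoking Lemma~\ref{lm8}, where the real content resides: its inductive proof guarantees that whenever the collapse depth of $\sigma$ fails to drop upon passing from $Z$ to the larger complex $Y$, some $\sigma$-accessible boundary edge of $Z$ must persist as a boundary (hence free) edge of $Y$. Everything surrounding this is bookkeeping, and the one point demanding care is the identification ``$e'\in\partial Y\iff e'$ is free in $Y$,'' which is immediate from the definition of the boundary, together with keeping the inclusion $A_Z(\sigma,e)\subset A_Z(\sigma)$ straight. I therefore expect no real obstacle beyond applying Lemma~\ref{lm8} correctly.
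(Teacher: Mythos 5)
Your argument is correct and is essentially the paper's own proof: assume equality for contradiction (which Lemma~\ref{ge} forces once $D_Y(\sigma)\le D_Z(\sigma)$ is denied), invoke Lemma~\ref{lm8} to produce an edge $e$ with $\emptyset\ne A_Z(\sigma,e)\subset A_Y(\sigma,e)\subset\partial Y$, and note that such an edge is free in $Y$ yet lies in $A_Z(\sigma)$, contradicting the hypothesis. Your explicit handling of the case $D_Z(\sigma)=\infty$ is a small extra care the paper leaves implicit, but the route is the same.
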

\begin{proof} For a contradiction, assume that $D_Y(\sigma) \le D_Z(\sigma)$.  Then $D_Y(\sigma) =D_Z(\sigma)$ by Lemma \ref{ge}. 
We may now apply Lemma \ref{lm8} which claims that there is an edge $e$ of 
$\sigma$ for which $\emptyset\not=A_Z(\sigma, e)\subset A_Y(\sigma, e)\subset \partial Y$. 
This contradicts our assumption that no edge in $A_Z(\sigma)$ lies on the boundary 
$\partial Y$. 
\end{proof}

\subsection{The list of forbidden $r$-pseudo-surfaces $\LL_{k,r}$} 

For a pair of integers $k=0, 1, \dots,$  and $r=2, 3, \dots$ we denote by $\LL_{k,r}$ the set of all isomorphism types of 
$r$-pseudo-surfaces $S$ with the following properties:
\begin{enumerate}
  \item[(a)] Each $S\in \LL_{k,r}$ has a specified 2-simplex $\sigma_\ast$ (called {\it the center}). 
  \item[(b)] If $\partial S\not=\emptyset$ then $D_S(\sigma_\ast) =k$. 
  \item[(c)] $d_S(\sigma_\ast, \sigma)\le k$ for any 2-simplex $\sigma$. 
  \end{enumerate}

\begin{figure}[h]
\begin{center}
\resizebox{11cm}{4cm}{\includegraphics[20,412][548,595]{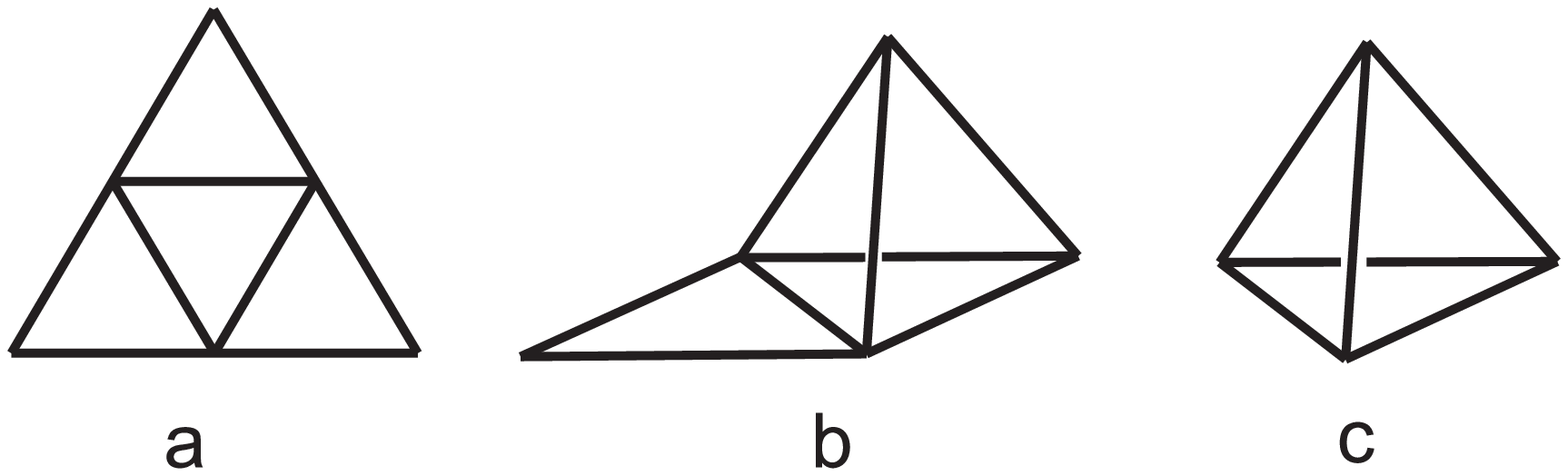}}
\end{center}
\caption{Surfaces $\LL_{1,2}$.}\label{lone}
\end{figure}

\noindent Note that $\LL_{0,r}=\{S\}$ consists of a single complex $S=\sigma_\ast$ (the triangle). 

The set $\LL_{1,2}$ consists of the three surfaces shown in Figure \ref{lone}. Each of the surfaces 
a, b, c is a union of 4 triangles. The surface c is a tetrahedron, b is a tetrahedron with one face open, and a is a fully flattened tetrahedron. 

It is clear that $\LL_{k,r}$ is finite and $\LL_{k,r} \subset \LL_{k, r+1}$. 

\begin{example}\label{ex1}{\rm Consider the following important family of surfaces $S_k\in \LL_{k, 2}$ where $k=0,1,2,\dots$. The first surface $S_0$ is defined as a single triangle $S_0=\sigma_\ast$. The next surface $S_1$ is the
shown in Figure \ref{lone} a. Surfaces $S_2$ and $S_3$ are shown in Figure \ref{sk}. In general, the surface $S_k$ is obtained from $S_{k-1}$ by adding a triangle to every edge of the boundary 
$\partial S_{k-1}$. It is clear that for the central triangle $\sigma_\ast$ of $S_k$, one has $D_{S_k}(\sigma_\ast)=k$. Thus $S_k$ is not collapsible to a graph in $k$ steps, but is collapsible in $k+1$ steps.}
\end{example}
\begin{figure}[h]
\begin{center}
\resizebox{10cm}{4.5cm}{\includegraphics[73,382][501,597]{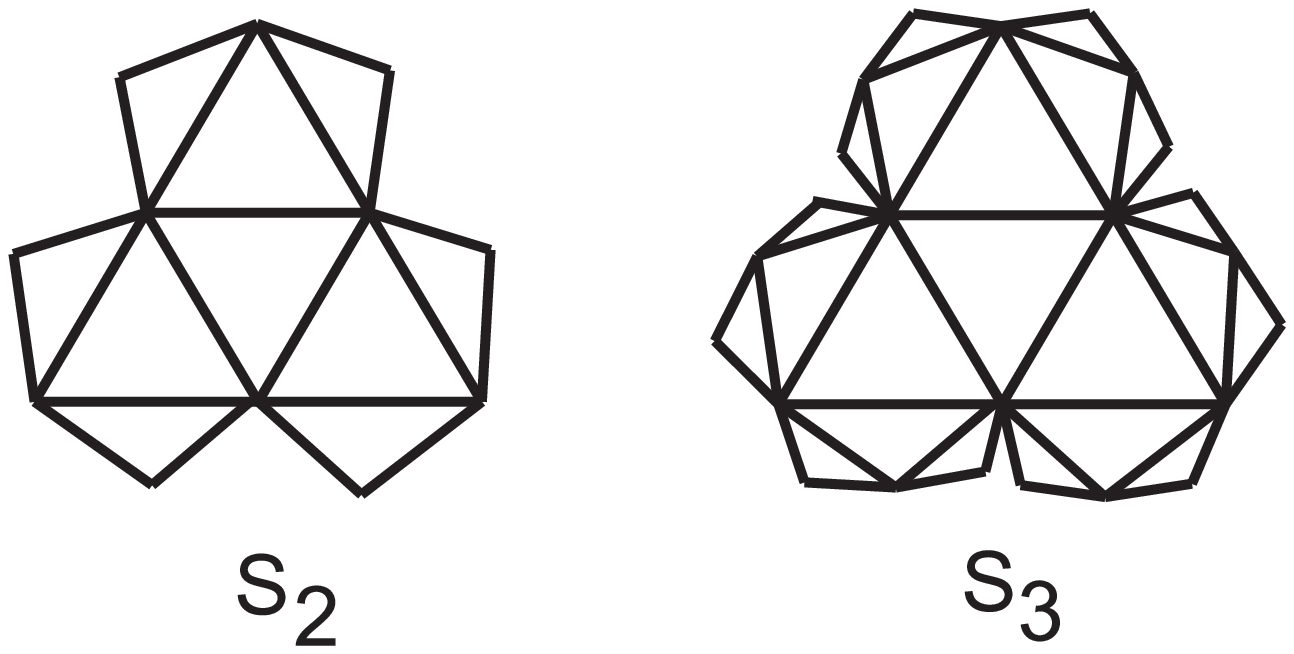}}
\end{center}
\caption{Surfaces $S_k\in \LL_{k,2}$.}\label{sk}
\end{figure}

The following Theorem plays a key role in this paper:

\begin{theorem}\label{thm1}
A  2-complex $Y$ of degree at most $r\ge 2$ is not collapsible to a graph  in $k$ steps, where $k=0, 1, 2, \dots$, if and only if there is a surface $S\in \LL_{k,r}$
which admits a simplicial embedding $S\to Y$. 
\end{theorem}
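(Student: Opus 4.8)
The statement is an ``if and only if,'' so I would prove the two directions separately, and the crux is to set up the right notion of a minimal obstruction. Throughout, fix the degree bound $r\ge 2$.

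\medskip

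\noindent\emph{Direction 1 (existence of an embedded $S\in\LL_{k,r}$ $\Rightarrow$ $Y$ not collapsible in $k$ steps).}
Suppose $S\to Y$ is a simplicial embedding, identifying $S$ with a subcomplex $S\subset Y$. By the defining property (b) of $\LL_{k,r}$, if $\partial S\neq\emptyset$ then $D_S(\sigma_\ast)=k$; in the closed case $D_S(\sigma_\ast)=\infty$. In either case $D_S(\sigma_\ast)\ge k$, and by Lemma \ref{ge} we get $D_Y(\sigma_\ast)\ge D_S(\sigma_\ast)\ge k$. Recalling the characterization stated in the excerpt (``$Y$ is collapsible to a graph in at most $k+1$ steps iff $D_Y(\sigma)\le k$ for every $2$-simplex $\sigma$''), having some $\sigma_\ast$ with $D_Y(\sigma_\ast)\ge k$ means $Y$ is \emph{not} collapsible in $k$ steps. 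This direction is the easy one; it uses monotonicity (Lemma \ref{ge}) and nothing more.

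\medskip

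\noindent\emph{Direction 2 ($Y$ not collapsible in $k$ steps $\Rightarrow$ some $S\in\LL_{k,r}$ embeds).}
This is the substantive direction, and it is where I expect the real work. Since $Y$ is not collapsible to a graph in $k$ steps, there is a $2$-simplex $\sigma_\ast$ of $Y$ with $D_Y(\sigma_\ast)\ge k$. The idea is to construct the witnessing surface $S$ by ``pulling back'' the collapsing history that forces $\sigma_\ast$ to survive for $k$ steps. Concretely, I would run the collapse of $Y$ and, starting from $\sigma_\ast$, trace back all collapsing paths ending at $\sigma_\ast$ of length $k$: by Lemma 1 (the unnamed first lemma), whenever $D_Y(\sigma)=j>0$, some edge of $\sigma$ carries a neighbor with $D$-value $j-1$, so one can descend step by step from $\sigma_\ast$ down to free simplices. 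Collecting all $2$-simplices $\sigma$ reachable from $\sigma_\ast$ by such descending (collapsing) paths, together with their faces, produces a candidate subcomplex $S\subset Y$. The distance condition (c), $d_S(\sigma_\ast,\sigma)\le k$, holds because each $\sigma$ in $S$ lies on a collapsing path of length at most $k$ to $\sigma_\ast$, hence is within adjacency-distance $k$; the degree bound $S$ has degree $\le r$ is inherited from $Y$; purity and strong connectivity follow by construction since every simplex is joined to $\sigma_\ast$ through a chain of edge-adjacencies.

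\medskip

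\noindent\emph{The main obstacle.}
The delicate point is verifying condition (b), namely that in the \emph{constructed} $S$ one still has $D_S(\sigma_\ast)=k$ (and not something smaller). Deleting triangles of $Y$ to form $S$ can only speed up the collapse, so a priori $D_S(\sigma_\ast)$ could drop below $k$ — this is exactly the phenomenon controlled by Corollary \ref{corgood} and Lemma \ref{lm8}. I would therefore not take all reachable simplices naively, but build $S$ greedily/inductively on the collapse depth $j=0,1,\dots,k$, at each stage adding enough simplices to block premature collapse: for the central simplex and each intermediate $\sigma$ with target $D$-value $j$, I include the neighbors across the relevant edges so that the hypotheses of Corollary \ref{corgood} are met, guaranteeing $D_S(\sigma_\ast)+1$ grows in step with the depth. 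One must also separately handle the closed case (when the descent from $\sigma_\ast$ never reaches the boundary, so $D_Y(\sigma_\ast)=\infty$), where $S$ is taken to be a closed $r$-pseudo-surface neighborhood of $\sigma_\ast$ of radius $k$ and condition (b) is vacuous. Finally, since $S$ is a subcomplex of a $2$-complex of bounded degree built within adjacency-radius $k$ of a single triangle, it is finite, so it realizes one of the finitely many isomorphism types in $\LL_{k,r}$; that $\LL_{k,r}$ was \emph{defined} to consist of exactly the surfaces satisfying (a)--(c) is what closes the argument.
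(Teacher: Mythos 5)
Your overall strategy coincides with the paper's: the easy direction is exactly the monotonicity argument via Lemma \ref{ge}, and your ``greedy/inductive'' construction for the hard direction --- descend one collapse level at a time and, across each edge of the $\sigma_\ast$-accessible boundary of the partially built surface, attach an extra triangle of $Y$ so that Corollary \ref{corgood} forces $D_S(\sigma_\ast)$ to keep pace with the depth --- is precisely the content of the paper's Lemma \ref{lm10}. You correctly identified the main obstacle (condition (b) could fail because passing to a subcomplex can only accelerate the collapse) and the right tool to overcome it.

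There is, however, a genuine gap in your treatment of the closed case. You propose to take $S$ to be ``a closed $r$-pseudo-surface neighborhood of $\sigma_\ast$ of radius $k$'' and declare condition (b) vacuous. But the radius-$k$ ball $Z$ around $\sigma_\ast$ inside a closed complex is in general \emph{not} closed (think of the ball of radius $k$ around a triangle in a large triangulated sphere: it is a disk with nonempty boundary), so no such closed neighborhood need exist, and for the actual ball $Z$ condition (b) is not vacuous at all --- one must still control $D_Z(\sigma_\ast)$. What is true is that every free triangle of $Z$ lies at distance exactly $k$ from $\sigma_\ast$, whence $D_Z(\sigma_\ast)\ge k$; but $D_Z(\sigma_\ast)$ may exceed $k$ or be infinite (e.g.\ if some $Z^{(j)}$ becomes closed), in which case $Z$ itself does not belong to $\LL_{k,r}$. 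The paper closes this gap by recursion: since $Z$ is not collapsible in $k$ steps, either some $2$-simplex of $Z$ has finite depth exactly $k$ --- and then Lemma \ref{lm10} applied to $Z$ produces the desired $S$ --- or iterated collapse of $Z$ terminates in a closed subcomplex of diameter at most $k$ from $\sigma_\ast$, which serves as $S$. Your argument needs this extra step (or some equivalent) to be complete; everything else is in order.
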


In the proof, we will use the following statement: 

\begin{lemma}\label{lm10}
Let $Y$ be a finite 2-dimensional simplicial complex of degree at most $r$ and let $\sigma$ be a 2-simplex in $Y$ with $D_Y(\sigma)=k$, where $k=0, 1, 2, \dots$.
Then there exists a surface $S\in \LL_{k,r}$ and a simplicial embedding $S\to Y$ such that the central simplex $\sigma_\ast$ of $S$ is mapped onto $\sigma$. 
\end{lemma}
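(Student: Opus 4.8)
The plan is to prove Lemma~\ref{lm10} by induction on $k=D_Y(\sigma)$, constructing the forbidden surface $S$ together with its embedding $S\to Y$ by tracing backwards through the collapsing process. The key idea is that the collapsing paths ending at $\sigma$ identify exactly the 2-simplexes that force $\sigma$ to survive for $k$ steps, and the union of these simplexes (suitably abstracted) should be the surface $S\in\LL_{k,r}$.

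First I would dispose of the base case $k=0$: here $\sigma$ is free, so $S=\sigma_\ast$ is the single triangle $\LL_{0,r}=\{\sigma_\ast\}$, and the embedding sends $\sigma_\ast$ to $\sigma$. For the inductive step, assume the statement for all values less than $k$. By Lemma~1 (the first unnumbered Lemma), $\sigma$ has an edge $e$ such that every 2-simplex $\sigma'\neq\sigma$ incident to $e$ satisfies $D_Y(\sigma')<k$, and at least one such $\sigma'$ has $D_Y(\sigma')=k-1$. More generally, for each edge of $\sigma$ and each 2-simplex $\sigma'$ incident to that edge with $D_Y(\sigma')=k-1$, the inductive hypothesis yields a surface $S'\in\LL_{k-1,r}$ embedded in $Y$ with center mapped to $\sigma'$. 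The surface $S$ should be assembled by gluing $\sigma_\ast$ (mapping to $\sigma$) to these lower-level pieces along the shared edges, taking the union over all boundary edges of $\sigma$ and all incident simplexes realizing the decrease in $D_Y$. Because each $\sigma'$ satisfies $d_Y(\sigma',\sigma)=1$ and $d_{S'}(\sigma'_\ast,\tau)\le k-1$ for all its triangles, the triangle inequality gives $d_S(\sigma_\ast,\tau)\le k$, verifying property~(c); and the local collapsing behavior at $\sigma$ is preserved under the gluing so that $D_S(\sigma_\ast)=k$, verifying property~(b).

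The main obstacle will be carrying out this gluing \emph{abstractly} so that $S$ is a genuine $r$-pseudo-surface (finite, pure, strongly connected, degree $\le r$) rather than just the image subcomplex in $Y$, and simultaneously controlling the function $D_S(\sigma_\ast)$. The subtlety is that when one takes the union of the inductively-constructed subsurfaces inside $Y$, distinct pieces may overlap in unexpected ways, possibly lowering $D$ by creating extra free edges, or the image may fail to be one of the standard isomorphism types in the finite list $\LL_{k,r}$. The clean way to handle this is to build $S$ as an abstract complex by formally attaching the $S'$ to $\sigma_\ast$, then produce the simplicial map $S\to Y$ and verify it is an embedding; here Corollary~\ref{corgood} and Lemma~\ref{lm8} are the essential tools, since they guarantee that passing to a subcomplex cannot increase $D$ and that accessible boundary edges are inherited, which is precisely what is needed to show $D_S(\sigma_\ast)=D_Y(\sigma)=k$ holds for the abstract $S$ rather than merely $D_S(\sigma_\ast)\le k$.

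Finally I would check the degree bound: since $Y$ has degree at most $r$ and $S$ embeds in $Y$, every edge of $S$ is incident to at most $r$ 2-simplexes, so $S$ is an $r$-pseudo-surface and $S\in\LL_{k,r}$ by construction. The strong connectivity of $S$ follows from property~(c), as every triangle is joined to the center $\sigma_\ast$ by a chain of adjacent triangles of length at most $k$. I expect the bulk of the technical work to lie in the careful bookkeeping of the collapsing paths and in ensuring that the abstract gluing realizes the correct value $D_S(\sigma_\ast)=k$ rather than something smaller; the existence of the embedding itself is comparatively routine once the abstract surface is defined.
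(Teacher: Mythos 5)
Your induction runs in a genuinely different direction from the paper's: you recurse \emph{spatially}, over neighbors $\sigma'$ of $\sigma$ with $D_Y(\sigma')=k-1$, and try to glue surfaces from $\LL_{k-1,r}$ centered at those neighbors onto $\sigma_\ast$. The paper instead recurses \emph{temporally}: it applies the inductive hypothesis to the \emph{same} simplex $\sigma$ inside the once-collapsed complex $Y'$ (where $D_{Y'}(\sigma)=k-1$), obtaining $S'\in\LL_{k-1,r}$ centered at $\sigma$, and then adjoins one blocking triangle $\sigma_e\subset Y$ for each edge $e$ of the accessible boundary $A_{S'}(\sigma)$, invoking Corollary \ref{corgood} to get $D_S(\sigma)\ge k$ and Lemma \ref{ge} for the reverse inequality.

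Your version has a genuine gap. You only attach material at edges of $\sigma$ that carry a neighbor with $D_Y(\sigma')=k-1$ \emph{exactly}, but an edge of $\sigma$ need not have such a neighbor. Since $D_Y(\sigma)=k$, each edge of $\sigma$ does have some incident triangle $\sigma'\ne\sigma$ with $D_Y(\sigma')\ge k-1$, but that value can be $\ge k$ or $=\infty$ --- for instance when the edge is shared with a triangle belonging to a closed subcomplex of $Y$. At such an edge your construction attaches nothing (and the inductive hypothesis is of no help, since it only covers simplexes with $D_Y<k$), so that edge is free in your $S$, hence $\sigma_\ast$ is free in $S$ and $D_S(\sigma_\ast)=0$, violating property (b) of $\LL_{k,r}$. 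To certify $D_S(\sigma_\ast)\ge k$ you must ensure that \emph{every} edge of $\sigma_\ast$ retains a companion triangle surviving through $S^{(k-1)}$, and this is exactly what the accessible-boundary bookkeeping ($A_{S'}(\sigma)$ plus Corollary \ref{corgood}) delivers in the paper: after the first-collapse induction, the only edges that need blocking are those of $A_{S'}(\sigma)\subset\partial S'$, and each can be blocked by a single triangle of $Y$ irrespective of that triangle's $D_Y$-value. A secondary problem is your ``abstract gluing'': the lemma demands a simplicial \emph{embedding} $S\to Y$, and distinct inductive pieces may share triangles of $Y$, so the formally glued complex need not inject; the tools you cite (Lemma \ref{lm8}, Corollary \ref{corgood}) compare a subcomplex with an ambient complex and say nothing about an abstract gluing versus its image. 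The paper sidesteps this by defining $S$ outright as a union of subcomplexes of $Y$, so the embedding is the inclusion.
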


\begin{proof}[Proof of Lemma \ref{lm10}] We will use induction on $k=D_Y(\sigma)$. 
For $k=0$, the statement is obvious. Assume that it is true for all cases with $D_Y(\sigma)<k$, and consider the situation when $D_Y(\sigma)=k>0$. If $Y\searrow Y'$ is the first collapse, then 
$\sigma\subset Y'$ and clearly
$$D_{Y'}(\sigma)=k-1$$
and $Y'$ has degree at most $r$. 
By the inductive hypothesis, there exists $S'\in \LL_{k-1,r}$ and a simplicial embedding $S'\to Y'$, mapping the central simplex of $S'$ onto $\sigma$. 

For each edge 
$e$ lying in $A_{S'}(\sigma)$ choose a 2-simplex $\sigma_e\subset Y$ as follows. If $e\subset \partial Y'$, let $\sigma_e$ be any free triangle  in $Y$ containing $e$. 
If $e\not\subset \partial Y'$, let $\sigma_e$ be any triangle in $Y'$ containing $e$ which is not in $S'$; such $\sigma_e$ exists since $e\not\subset \partial Y'$. 

Next we define a subcomplex $S\subset Y$ as the union
$$S=S'\cup \bigcup_{e}
\sigma_e \, \subset Y,$$
where $e$ runs over the edges in $A_{S'}(\sigma)$. Note that $S$ is finite, pure, and strongly connected since $S'$ is an $r$-pseudo-surface.  
Moreover, the degree of $S$ is at most $r$ since it is a subcomplex of $Y$. One has 
$D_S(\sigma)\ge k$
by Corollary \ref{corgood}. More precisely, we obtain that $D_S(\sigma)=k$
by Lemma \ref{ge}. Finally we observe that obviously $d_S(\sigma, \sigma')\le k$ for any 2-simplex $\sigma'$ of $S$.
Thus, $S\in \LL_{k,r}$. \end{proof}

\begin{proof}[Proof of Theorem \ref{thm1}] Consider the sequence of successive collapses $Y\searrow Y^{(1)}\searrow Y^{(2)} \searrow Y^{(3)}\searrow \dots$. 
We assume that $Y$ is not collapsible to a graph in $k$ steps, which implies that there are two possibilities: either (a) $Y^{(i)}\not= Y^{(i+1)}$ for any $i<k$;  or (b) for some $i<k$, 
one has $\partial Y^{(i)}=\emptyset$. 

In case (a), the complex $Y$ contains a 2-simplex with $D_Y(\sigma)=k$ and Lemma \ref{lm10} gives us an embedding of an $r$-pseudo-surface $S\in \LL_{k,r}$ into $Y$. 

In case (b), we have
$\partial Y^{(i)}=\emptyset$
for some $i<k$. Fix a 2-simplex $\sigma_\ast\in Y^{(i)}$ and consider distances $d_{Y^{(i)}}(\sigma_\ast, \sigma)$ to various 2-simplexes $\sigma$ of $Y^{(i)}$. If all these distances are less than or equal to $k$, then 
$Y^{(i)}$ belongs to $\LL_{k,r}$ and we are done. If there are simplexes $\sigma$ such that $d_{Y^{(i)}}(\sigma_\ast, \sigma)>k$, then consider the subcomplex 
$Z\subset Y^{(i)}$ defined as the union of all $\sigma$ with $d_{Y^{(i)}}(\sigma_\ast, \sigma)\le k$. 

Clearly $Z$ is not collapsible to a graph in $k$ steps. Therefore, in the sequence of collapses $Z\searrow Z^{(1)}\searrow Z^{(2)} \searrow Z^{(3)}\searrow \dots$, we again 
have either case (a) or (b) as above. In case (a), we apply Lemma \ref{lm10}; and in case (b), we obtain a subcomplex $S\subset Z$ with $\partial S=\emptyset$ such that
$d(\sigma_\ast, \sigma)\le k$ for any $\sigma\subset S$. We have $S\in \LL_{k,r}$ in either case, completing the proof. 
\end{proof}

\section{Collapsibility of a random 2-complex}

\subsection{The degree sequence}
Recall that the degree of an edge $e$ in a 2-complex is defined as the number of 2-simplexes which contain $e$. The degree of an edge in a random 2-complex 
$Y\in G(\Delta_n^{(2)},p)$ is an integer in the set $\{0, 1, \dots, n-2\}$.


Let $X_k: G(\Delta_n^{(2)},p)\to \Z$ be the random variable counting the number of edges of degree $k$ in a random $2$-complex, where $k=0,1, 2, \dots, n-2$.
A straightforward calculation reveals that 
\[
\E(X_k) = {n \choose 2} {{n-2}\choose k}p^k(1-p)^{n-2-k}.\]
The expectation of the number of edges of degree at least $r$ in a random $2$-complex is 
\begin{equation} \label{edegree}
\sum_{k=r}^{n-2} \E(X_k) \le n^2\sum_{k=r}^{n-2}(pn)^k \le \frac{n^2(pn)^r}{1-pn}.
\end{equation}

\begin{corollary}\label{cor12} The probability that a random 2-complex $Y\in G(\Delta_n^{(2)}, p)$ has an edge of degree at least $r$ is less than or equal to  
$$\frac{n^{2+r}p^r}{1-pn}.$$ 
Thus, if $$p\ll n^{-1-\frac{2}{r}},$$
then a random 2-complex $Y\in G(\Delta_n^{(2)}, p)$ has no edges of degree $r$ or greater, a.a.s.
\end{corollary}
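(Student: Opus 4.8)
The plan is to combine the first-moment (Markov) inequality with the expectation bound already recorded in \eqref{edegree}. I would begin by introducing the random variable $N=\sum_{k=r}^{n-2} X_k$, which counts the total number of edges of degree at least $r$ in a random complex $Y\in G(\Delta_n^{(2)},p)$. The event that $Y$ possesses an edge of degree $\ge r$ is precisely $\{N\ge 1\}$, so Markov's inequality gives
\[
\prob(N\ge 1)\le \E(N)=\sum_{k=r}^{n-2}\E(X_k).
\]
Substituting the estimate \eqref{edegree} and writing $(pn)^r=n^rp^r$ then yields
\[
\prob(N\ge 1)\le \frac{n^2(pn)^r}{1-pn}=\frac{n^{2+r}p^r}{1-pn},
\]
which is exactly the first assertion of the corollary.

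For the second assertion, I would assume $p\ll n^{-1-2/r}$ and show that this upper bound tends to $0$. Raising $p\ll n^{-1-2/r}$ to the $r$-th power gives $n^{r+2}p^r=(p\cdot n^{1+2/r})^r\to 0$, so the numerator vanishes asymptotically. Moreover, $p\ll n^{-1-2/r}\ll n^{-1}$ forces $pn\to 0$, whence the denominator $1-pn\to 1$ and in particular remains bounded away from $0$ for all large $n$. Therefore the bound from the first part tends to $0$, so $\prob(N\ge 1)\to 0$; equivalently, a random 2-complex $Y$ has no edge of degree $r$ or greater, a.a.s.

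Since the decisive expectation estimate \eqref{edegree} is already established, I do not anticipate any substantial obstacle: the argument is a direct application of the first-moment method followed by routine asymptotic bookkeeping. The only points that call for minor care are verifying the exponent arithmetic $n^{2+r}p^r=n^2(pn)^r$ so that it matches the stated fraction, and confirming that the denominator $1-pn$ does not degenerate — both of which follow immediately from the hypothesis $p\ll n^{-1-2/r}$.
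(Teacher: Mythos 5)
Your proposal is correct and is exactly the argument the paper intends: the paper's proof consists of the single remark that the corollary ``follows from inequality \eqref{edegree} by applying the first moment method,'' and you have simply written out that first-moment computation together with the routine verification that $n^{2+r}p^r\to 0$ and $pn\to 0$ under the hypothesis $p\ll n^{-1-2/r}$. No issues.
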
 
\begin{proof} This follows from inequality \eqref{edegree} by applying the first moment method, see, for instance, \cite{JLR}. 
\end{proof}

\subsection{The invariant $\tilde \mu(S)$.} 
Following \cite{BHK} and \cite{CFK}, for a $2$-complex $S$ with $v=v(S)$ vertices and $f=f(S)>0$ faces one defines
\[
\mu(S) = \frac{v}{f} \in \Q,
\]
and 
\[
\tilde\mu(S)=\min_{S'\subset S} \mu(S'),
\]
where $S'$ runs over all subcomplexes of $S$ or, equivalently, over all pure subcomplexes $S'\subset S$. Note the following {\it monotonicity property} of $\tilde \mu$:
\begin{eqnarray}\label{mon}
\mbox{if} \quad S\subset T, \quad \mbox{then}\quad \tilde \mu(S) \ge \tilde \mu(T). 
\end{eqnarray}

The invariant $\tilde \mu$ controls embeddability of finite 2-complexes into random 2-complexes as illustrated by the following result.

\begin{theorem}[\cite{CFK}]\label{tilda} Let $S$ be a finite simplicial complex.  
\begin{enumerate}
  \item[(a)] If $p\ll n^{-\tilde \mu (S)}$, the probability that $S$ admits a  simplicial embedding into a random 2-complex $Y\subset G(\Delta_n^{(2)}, p)$ tends to zero as $n\to \infty$;
  \item[(b)] If $p\gg n^{-\tilde \mu (S)}$, the probability that $S$ admits a simplicial embedding into a random 2-complex $Y\subset G(\Delta_n^{(2)}, p)$ tends to one as $n\to \infty$. 
\end{enumerate}
\end{theorem}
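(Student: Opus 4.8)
The plan is to prove Theorem~\ref{tilda} by a standard first-and-second-moment argument applied to the random variable counting simplicial embeddings of $S$ into $Y$. Fix a finite $2$-complex $S$ and let $N$ denote the number of subcomplexes of $\Delta_n^{(2)}$ that are isomorphic to $S$ together with all their $2$-faces present in $Y$; equivalently, let $X$ be the random variable counting (images of) simplicial embeddings $S\to Y$. The probability that a fixed embedded copy of $S$ lies in $Y$ is $p^{f(S)}$, since each of the $f(S)$ faces is retained independently with probability $p$, and the number of potential placements of $S$ in $\Delta_n$ is $\Theta(n^{v(S)})$ (the vertices can be sent to any $v(S)$ of the $n$ vertices, up to the automorphisms of $S$). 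Hence $\E(X)=\Theta\bigl(n^{v(S)}p^{f(S)}\bigr)=\Theta\bigl((n^{\mu(S)}p)^{f(S)}\bigr)$ up to constants depending only on $S$.

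For part~(a), I would use the first moment method. If $p\ll n^{-\tilde\mu(S)}$, I must show $\E(X')\to 0$, where $X'$ counts copies of the \emph{densest} subcomplex rather than of $S$ itself. The subtlety is that $\E(X)\to 0$ requires $p\ll n^{-\mu(S)}$, not $n^{-\tilde\mu(S)}$, and since $\tilde\mu(S)\le\mu(S)$ the hypothesis $p\ll n^{-\tilde\mu(S)}$ is the \emph{weaker} (harder) condition. The resolution is the monotonicity property~\eqref{mon}: if $S$ embeds in $Y$ then so does the subcomplex $S'\subset S$ achieving $\tilde\mu(S)=\mu(S')$. So it suffices to show that $S'$ does not embed, and for that the relevant threshold is $n^{-\mu(S')}=n^{-\tilde\mu(S)}$. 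Applying the first moment method to the count of copies of $S'$, its expectation is $\Theta\bigl((n^{\mu(S')}p)^{f(S')}\bigr)\to 0$ under $p\ll n^{-\mu(S')}=n^{-\tilde\mu(S)}$, and by Markov's inequality the probability of a copy of $S'$ (hence of $S$) tending to zero follows.

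For part~(b), I would use the second moment method. Under $p\gg n^{-\tilde\mu(S)}$ one wants $\Pr(X>0)\to 1$, and by Chebyshev it is enough to show $\mathrm{Var}(X)=o\bigl(\E(X)^2\bigr)$, i.e.\ $\E(X^2)/\E(X)^2\to 1$. The standard computation expands $\E(X^2)=\sum_{A,B}\Pr(A\subset Y \text{ and } B\subset Y)$ over ordered pairs of copies $A,B$ of $S$, and groups the terms according to the isomorphism type of the overlap $A\cap B$. The diagonal-type contribution (disjoint or edge-only overlaps, where no $2$-faces are shared) reproduces $\E(X)^2(1+o(1))$, and the task is to show every other overlap type contributes a vanishing fraction. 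The main obstacle is precisely this overlap analysis: for an overlap that is a common subcomplex $W$ sharing $f(W)>0$ faces, the contribution is of order $n^{2v(S)-v(W)}p^{2f(S)-f(W)}$, and one must verify this is $o(n^{2v(S)}p^{2f(S)})$, i.e.\ $n^{v(W)}p^{f(W)}\to\infty$, for every such $W$. This is exactly where $\mu(W)\ge\tilde\mu(S)$ enters: since $W$ is a subcomplex of $S$ we have $\mu(W)\ge\tilde\mu(S)$, whence $n^{v(W)}p^{f(W)}=(n^{\mu(W)}p)^{f(W)}\gg(n^{\tilde\mu(S)}p)^{f(W)}\to\infty$ under the hypothesis. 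Summing over the finitely many overlap types $W$ then gives $\E(X^2)=(1+o(1))\E(X)^2$, and Chebyshev yields $\Pr(X>0)\to 1$. The delicate bookkeeping is ensuring the definition of $\tilde\mu$ as a minimum over \emph{all} subcomplexes covers every overlap $W$ that can arise, which it does by construction.
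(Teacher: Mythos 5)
Your first-and-second-moment argument is correct, and it is exactly the standard subgraph-containment-threshold proof: the paper itself gives no proof of Theorem~\ref{tilda} but imports it from \cite{CFK}, where it is established by precisely this route (reduce part (a) to the densest subcomplex realizing $\tilde\mu(S)$ and apply Markov; for part (b) control $\E(X^2)/\E(X)^2$ by classifying overlaps $W\subset S$ and using $\mu(W)\ge\tilde\mu(S)$, then apply Chebyshev). The only points worth tightening are to note explicitly that $\E(X)\to\infty$ in part (b) (which follows from $\mu(S)\ge\tilde\mu(S)$) and that overlaps with $f(W)=0$ but shared vertices contribute $o(\E(X)^2)$ rather than being part of the main term, but neither affects the validity of the argument.
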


\begin{definition} 
A 2-complex $S$ is called balanced if $\tilde \mu(S) = \mu(S)$, or, equivalently, $\mu(S')\ge \mu(S)$ for any subcomplex $S'\subset S$. 
\end{definition}

Any triangulated surface is balanced, see \cite{CFK}.

\begin{example}{\rm 
Suppose that a 2-complex $S$ has a free triangle with two free edges, and that the result $S'$ of removing this triangle satisfies $\mu(S')<1$. Then $\mu(S)>\mu(S')$ and $S$ is unbalanced. 
Indeed, if $\mu(S')=v/f$, where $v=v(S')$ and $f=f(S')$, then $v<f$ and we have $\mu(S) = (v+1)/(f+1)>v/f$. 
In this way one produces many unbalance 2-complexes, including 2-disks. }
\end{example}

Next, we examine the $\tilde \mu$ invariants of 2-complexes $S\in \LL_{k, r}$.

\begin{lemma}\label{closed} Let $S$ be a closed 2-complex, i.e., $\partial S=\emptyset$. Then $\tilde \mu(S)\le 1$. 
\end{lemma}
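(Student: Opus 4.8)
The plan is to exploit the fact that a closed $2$-complex has no free edges, which forces a favorable count of faces versus vertices. Recall that $\tilde\mu(S)=\min_{S'\subset S}\mu(S')$, so to prove $\tilde\mu(S)\le 1$ it suffices to exhibit a single subcomplex $S'\subset S$ with $\mu(S')\le 1$, i.e. with $v(S')\le f(S')$. The natural candidate is $S$ itself, so first I would try to show directly that $v(S)\le f(S)$ whenever $\partial S=\emptyset$.

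The key observation is the double-counting of incidences between edges and $2$-simplexes. Let $e(S)$ denote the number of edges, $f(S)$ the number of faces, and $v(S)$ the number of vertices. Since $\partial S=\emptyset$ means every edge lies in \emph{at least} two $2$-simplexes (no free edges), counting incident (edge, face) pairs in two ways gives $3f(S)=\sum_{\text{edges }e}\deg(e)\ge 2\,e(S)$, hence $e(S)\le \tfrac{3}{2}f(S)$. On the other hand, I would use the fact that the underlying graph (the $1$-skeleton of $S$) must contain enough edges relative to vertices; the cleanest route is to combine the incidence bound with an Euler-type inequality. Concretely, every vertex of $S$ appears in some face, and since $S$ is pure and strongly connected one expects $v(S)\le e(S)-(\text{something})$; the simplest rigorous statement is that a connected complex satisfies $v(S)\le e(S)+1$.

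Putting these together is the heart of the matter. From $e(S)\le \tfrac32 f(S)$ alone one does not immediately get $v\le f$, so I would instead argue via the relation among all three numbers. The cleanest approach is: each of the $3f(S)$ corners (vertex-face incidences, or better edge-face incidences) must be accounted for, and since no edge is free we have $2e(S)\le 3f(S)$. For a connected complex $v(S)\le e(S)+1$, but more usefully, since $S$ is strongly connected and pure, I would show every edge is used, so that a spanning argument gives $v(S)-1\le e(S)$ combined with the closedness bound. I expect the decisive inequality to be $v(S)\le e(S) - f(S) + (\text{Euler characteristic term})$; substituting $2e\le 3f$ should collapse this to $v\le f$.

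The main obstacle will be handling the fact that closedness only guarantees \emph{at least} two faces per edge rather than exactly two, so $S$ need not be a manifold and the naive Euler characteristic computation does not apply verbatim. I anticipate the resolution is to pass to a subcomplex if necessary rather than arguing on all of $S$: if $S$ itself does not satisfy $v\le f$, one removes faces carefully while preserving enough structure, or one simply invokes the incidence inequality $2e(S)\le 3f(S)$ together with connectivity $v(S)\le e(S)-f(S)+1$ (valid since $\chi(S)\le 1$ for a connected $2$-complex whose $H_2$ and $H_1$ contributions are nonnegative) to conclude $v(S)\le e(S)-f(S)+1\le \tfrac32 f(S)-f(S)+1=\tfrac12 f(S)+1\le f(S)$ once $f(S)\ge 2$. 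Since a nonempty closed $2$-complex must contain at least two faces sharing each edge, $f(S)\ge 2$ holds automatically, and the degenerate small cases can be checked by hand. This yields $\mu(S)=v(S)/f(S)\le 1$, and hence $\tilde\mu(S)\le\mu(S)\le 1$ as required.
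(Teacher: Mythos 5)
Your argument has a genuine gap at its central step. You invoke the inequality $v(S)\le e(S)-f(S)+1$, justified by the claim that $\chi(S)\le 1$ for a connected $2$-complex ``whose $H_2$ and $H_1$ contributions are nonnegative.'' But $\chi(S)=1-b_1(S)+b_2(S)$, so $\chi(S)\le 1$ holds only when $b_2(S)\le b_1(S)$; it fails precisely in the most relevant closed examples. For the boundary of the tetrahedron one has $v=4$, $e=6$, $f=4$, so $\chi=2$ and $e-f+1=3<4=v$: your chain $v\le e-f+1\le \tfrac12 f+1\le f$ breaks at the first link (even though the conclusion $\mu=1\le 1$ happens to hold there). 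You do flag that ``the naive Euler characteristic computation does not apply verbatim'' and suggest passing to a subcomplex, but you never say which subcomplex or why it works, and that is exactly where the content of the lemma lies.

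The paper's proof makes the case distinction you are missing explicit. If $H_2(S;\Z_2)=0$ (and $S$ is connected and pure, reductions you also need but only gesture at), then indeed $\chi(S)\le 1$, and combining $v-e+f\le 1$ with $2e\le 3f$ gives $\mu(S)\le \tfrac12+\tfrac1f\le 1$ --- essentially your computation, which is fine in this case. If $H_2(S;\Z_2)\ne 0$, one takes a nonzero $2$-cycle over $\Z_2$ with minimal support; its support $S'$ is a closed pure subcomplex with $H_2(S';\Z_2)=\Z_2$, hence $\chi(S')\le 2$, and the same incidence count gives $\mu(S')\le \tfrac12+\tfrac{2}{f'}\le 1$ because a closed pure simplicial $2$-complex has $f'\ge 4$. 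Since $\tilde\mu(S)\le\mu(S')$, the lemma follows. To repair your proof you would need to supply this second case (or an equivalent device for bounding $b_2$); as written, the inequality you rely on is simply false for closed complexes carrying $2$-dimensional homology.
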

\begin{proof} Without loss of generality, we may assume that $S$ is connected, since otherwise we can apply the following arguments to a connected component of $S$ and use the monotonicity property (\ref{mon}). Moreover, we may assume that $S$ is pure, since otherwise we may deal with the maximal pure subcomplex of $S$ instead of $S$. 

Suppose first that $H_2(S;\Z_2)=0$. Then by the Euler--Poincar\'e theorem, $\chi(S)\le 1$, and we have
$$v-e+f=\chi(S)\le 1,\quad \mbox{and} \quad 3f\ge 2e,$$
where $v, e, f$ denote the numbers of vertices, edges and faces in $S$. In the latter inequality we used the assumptions that $S$ is pure and closed. These inequalities imply
$$v-f/2 \le \chi(S) \le 1, \quad \mbox{and}\quad \mu(S) \le 1/2 + 1/f.$$
Since $f\ge 4$ we obtain that $\tilde \mu(S) \le \mu(S)\le 3/4 <1.$

Assume now that $H_2(S;\Z_2)\not=0$. We will show that there is a subcomplex $S'\subset S$ which is also closed, $\partial S'=\emptyset$, and satisfies $H_2(S';\Z_2)=\Z_2$. 
Indeed, consider a nonzero two-dimensional cycle $c=\sum_{i\in I} \sigma_i$ with $\Z_2$ coefficients, where the $\sigma_i$ are distinct 2-simplexes of $S$. Let $I' \subseteq I$ be the minimal subset of the indexing set $I$ for which $c'=\sum_{i\in I'} \sigma_i$ is still a cycle, and let $S'=\bigcup_{i\in I'} \sigma_i$ be the corresponding subcomplex of $S$.  Then clearly $H_2(S';\Z_2)=\Z_2$ and $S'$ is closed and pure. 

By the Euler--Poincar\'e theorem, $\chi(S')\le 2$, and we have 
$$v'-e'+f'=\chi(S') \le 2, \quad \mbox{and}\quad 3f'\ge 2e',$$
where $v',e ', f'$ denote the numbers of vertices, edges and faces in $S'$. This gives
$$v'-f'/2\le \chi(S') \le 2,$$
and
\begin{eqnarray}\label{two}
\mu(S') \le \frac{1}{2} + \frac{2}{f'}. 
\end{eqnarray}
Since $f'\ge 4$, the last inequality gives $\mu(S')\le 1$. Finally, we have $\tilde \mu(S) \le \mu(S') \le 1$. 
\end{proof}
%

\begin{lemma}\label{nonclosed} If $S\in \LL_{k,r}$ for some $k\ge 0$, \, $r\ge 2$ then one has 
\begin{eqnarray}\label{finally}
\tilde \mu(S) \le 1+\frac{2}{k+1}.\end{eqnarray}
\end{lemma}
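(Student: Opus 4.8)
The plan is to split the argument according to whether $S$ is closed. If $\partial S=\emptyset$, then $S$ is already handled by Lemma \ref{closed}, which gives $\tilde\mu(S)\le 1\le 1+\frac{2}{k+1}$, and there is nothing more to do. So the substantive case is $\partial S\neq\emptyset$, where condition (b) in the definition of $\LL_{k,r}$ forces $D_S(\sigma_\ast)=k$. My strategy in this case is not to estimate $\mu(S)$ directly but to exhibit a single subcomplex $P\subset S$ whose $\mu$-value already meets the bound; since $\tilde\mu(S)=\min_{S'\subset S}\mu(S')$, producing one such $P$ suffices.

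The natural candidate is a collapsing path ending at the center. Because $D_S(\sigma_\ast)=k<\infty$, the accessible boundary $A_S(\sigma_\ast)$ is nonempty (by the remark that $A_Y(\sigma)\neq\emptyset$ iff $D_Y(\sigma)<\infty$), so there is a collapsing path $\sigma_0,\sigma_1,\dots,\sigma_k=\sigma_\ast$ with $D_S(\sigma_i)=i$ and with consecutive simplexes $\sigma_i,\sigma_{i+1}$ meeting in a common edge. I would take $P=\bigcup_{i=0}^{k}\sigma_i\subset S$, which is a simplicial subcomplex since it is a union of closed simplexes.

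It then remains to count the vertices and faces of $P$. Since the $\sigma_i$ have pairwise distinct values $D_S(\sigma_i)=i$, they are distinct triangles and $f(P)=k+1$. For the vertices I would assemble the triangles one at a time: $\sigma_0$ contributes $3$ vertices, and for each $i\ge 1$ the triangle $\sigma_i$ meets $\sigma_{i-1}$ in a full edge, hence shares two vertices with the previously assembled part $\sigma_0\cup\dots\cup\sigma_{i-1}$ and contributes at most one new vertex. Thus $v(P)\le k+3$, whence $\mu(P)=v(P)/f(P)\le \frac{k+3}{k+1}=1+\frac{2}{k+1}$, and therefore $\tilde\mu(S)\le\mu(P)\le 1+\frac{2}{k+1}$, which is \eqref{finally}. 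The degenerate value $k=0$ is the case $P=S=\sigma_\ast$, for which the same count gives $\mu=3=1+\tfrac{2}{k+1}$.

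I do not expect a serious obstacle in this argument; the two points that genuinely require care are (i) confirming that a collapsing path to $\sigma_\ast$ exists in the non-closed case, which is precisely what $D_S(\sigma_\ast)<\infty$ guarantees, and (ii) the vertex bookkeeping, where one must use that each new triangle is glued along a full edge (two vertices), not merely at a vertex, so that at most one new vertex is introduced per step. The bound is already tight at $k=0$, where $\LL_{0,r}$ is the single triangle with $\tilde\mu=3=1+\tfrac{2}{0+1}$, which indicates that passing to the collapsing-path subcomplex is the correct and essentially optimal choice.
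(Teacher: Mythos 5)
Your proposal is correct and follows essentially the same argument as the paper: both treat the closed case via Lemma \ref{closed} and, in the non-closed case, take the union of the triangles along a collapsing path $\sigma_0,\dots,\sigma_k=\sigma_\ast$, which has $k+1$ faces and at most $k+3$ vertices, yielding $\mu\le\frac{k+3}{k+1}$. Your extra remarks on the existence of the path and the vertex bookkeeping are accurate elaborations of steps the paper leaves implicit.
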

\begin{proof} If $S$ is closed the result follows from Lemma \ref{closed}. 
Assume now that $\partial S\not=\emptyset$. 
Let $\sigma_\ast$ be the central simplex of $S$ and let $\sigma_0, \sigma_1, \dots, \sigma_k=\sigma_\ast$ be a collapsing path leading to $\sigma_\ast$. 
Here $D_S(\sigma_i) = i$ and $\sigma_i\cap \sigma_{i+1}$ is an edge, see Definition \ref{defaccess}. 
Then the union 
$S'=\cup_{i=0}^k\sigma_i$ is a subcomplex having exactly $k+1$ faces and at most $k+3$ vertices.
Thus, 
$$\mu(S') \le \frac{k+3}{k+1}=1+\frac{2}{k+1},$$ 
establishing (\ref{finally}). 
\end{proof}

\subsection{The threshold for $k$-collapsibility.}
\begin{definition}
Let $\tilde\mu_{k,r}$ denote the largest possible value of the invariant $\tilde\mu(S)$ for $S$ a forbidden $r$-pseudo-surface, 
\[
\tilde \mu_{k,r} \, = \, \max_{S\in \LL_{k,r}}\tilde \mu(S) \, \in \Q.
\]
\end{definition}

For instance, examining the surfaces shown in Figure \ref{lone} reveals that $\tilde \mu_{1,2} =3/2$.

\begin{theorem}\label{thm21} Consider a random 2-complex $Y\in G(\Delta_n^{(2)},p)$. 
\begin{enumerate}
\item[(a)] If for some $r\ge 2$ and $k\ge 1$, one has $$p\ll n^{-1-\frac{2}{r+1}} \quad \mbox{and}\quad p\ll n^{-\tilde \mu_{k,r}},$$ then $Y$ is collapsible to a graph in at most $k$ steps, a.a.s.
\item[(b)] If for some $r\ge 2$ and $k\ge 1$, one has $p\gg n^{-\tilde \mu_{k,r}}$, then $Y$ is not collapsible to a graph in $k$ or fewer steps, a.a.s.
\end{enumerate}
\end{theorem}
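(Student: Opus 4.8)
The plan is to obtain Theorem \ref{thm21} by assembling three results already established: the forbidden-surface characterization of Theorem \ref{thm1}, the degree estimate of Corollary \ref{cor12}, and the embedding threshold of Theorem \ref{tilda}. Throughout I would use that $\LL_{k,r}$ is a \emph{finite} set and that $\tilde\mu_{k,r}=\max_{S\in\LL_{k,r}}\tilde\mu(S)$, so that $\tilde\mu(S)\le\tilde\mu_{k,r}$ for every $S\in\LL_{k,r}$.

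For part (a), I would first secure the degree hypothesis that makes Theorem \ref{thm1} applicable. Applying Corollary \ref{cor12} with the parameter $r+1$ in place of $r$, the assumption $p\ll n^{-1-\frac{2}{r+1}}$ guarantees that a.a.s.\ $Y$ has no edge of degree $\ge r+1$, i.e.\ $Y$ has degree at most $r$; call this event $A$, so $\P(A)\to 1$. Next I would rule out the forbidden surfaces: for each individual $S\in\LL_{k,r}$ we have $p\ll n^{-\tilde\mu_{k,r}}\le n^{-\tilde\mu(S)}$, so Theorem \ref{tilda}(a) gives that the probability $S$ embeds into $Y$ tends to $0$; since $\LL_{k,r}$ is finite, a union bound shows that the event $B$ that \emph{no} $S\in\LL_{k,r}$ embeds into $Y$ satisfies $\P(B)\to 1$. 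Finally, on the event $A\cap B$ the complex $Y$ has degree at most $r$ and contains no forbidden surface, so Theorem \ref{thm1} certifies that $Y$ is collapsible to a graph in at most $k$ steps; since $\P(A\cap B)\ge 1-\P(A^{c})-\P(B^{c})\to 1$, this proves (a).

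For part (b), the degree of $Y$ plays no role, since here I only need one embedding together with the monotonicity of $D$ under passage to subcomplexes. I would choose $S^{\ast}\in\LL_{k,r}$ realizing the maximum, $\tilde\mu(S^{\ast})=\tilde\mu_{k,r}$. The hypothesis $p\gg n^{-\tilde\mu_{k,r}}=n^{-\tilde\mu(S^{\ast})}$ together with Theorem \ref{tilda}(b) shows that $S^{\ast}$ admits a simplicial embedding into $Y$ a.a.s. On this event, letting $\sigma$ be the image of the center $\sigma_\ast$, the defining property of $\LL_{k,r}$ gives $D_{S^{\ast}}(\sigma_\ast)=k$ (or $=\infty$ when $S^{\ast}$ is closed), and Lemma \ref{ge} then yields $D_Y(\sigma)\ge D_{S^{\ast}}(\sigma_\ast)\ge k$. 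Hence $\sigma\subset Y^{(k)}$, so $Y^{(k)}$ is two-dimensional and $Y$ is not collapsible to a graph in $k$ or fewer steps, a.a.s.

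The one point requiring genuine care — and the reason the first hypothesis in (a) takes the specific shape $p\ll n^{-1-\frac{2}{r+1}}$ rather than $n^{-1-\frac{2}{r}}$ — is that Theorem \ref{thm1} is available only for complexes of degree at most $r$, while a random complex carries no a priori degree bound. The main obstacle is therefore not any single deep step but the bookkeeping of running the degree estimate of Corollary \ref{cor12} (at threshold $r+1$) in parallel with the first-moment bound on forbidden embeddings, and intersecting the two high-probability events before invoking the deterministic characterization. Everything else reduces to translating the individual thresholds $\tilde\mu(S)$ into the single uniform threshold $\tilde\mu_{k,r}$ via finiteness of $\LL_{k,r}$ and the inequality $\tilde\mu(S)\le\tilde\mu_{k,r}$.
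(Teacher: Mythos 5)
Your proposal is correct and follows essentially the same route as the paper's proof: Corollary \ref{cor12} (applied at threshold $r+1$) to secure the degree bound a.a.s., finiteness of $\LL_{k,r}$ together with Theorem \ref{tilda}(a) to exclude all forbidden surfaces, and Theorem \ref{thm1} to conclude part (a); Theorem \ref{tilda}(b) applied to a maximizer for part (b). Your explicit appeal to Lemma \ref{ge} in part (b), which sidesteps the degree hypothesis of Theorem \ref{thm1}, is a small but welcome extra precision over the paper's one-line conclusion there.
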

\begin{proof} By Corollary \ref{cor12}, if $p\ll n^{-1-\frac{2}{r+1}}$, then a random 2-complex $Y\in G(\Delta_n^{(2)},p)$ has degree at most $r$, a.a.s. 
Next, we apply Theorem \ref{thm1} and examine the embeddability of complexes $S\in \LL_{k,r}$ into $Y$. 
By Theorem \ref{tilda} (a), if $p\ll n^{-\tilde \mu(S)}$, then 
$S$ does not embed 
into $Y$, a.a.s. Since $\tilde \mu_{k,r}\ge \tilde \mu(S)$, we see that the assumption $p\ll n^{-\tilde \mu_{k,r}}$ implies that no $S\in \LL_{k,r}$ can be embedded into $Y$, a.a.s.
Thus, by Theorem \ref{thm1}, we see that $Y$ is collapsible to a graph in $ k$ or fewer steps. This proves part (a). 

To prove part (b), we apply Theorem \ref{tilda} (b) to conclude that if $p\gg n^{-\tilde \mu_{k,r}}$, then there exists $S\in \LL_{k,r}$ which is embeddable into $Y$, a.a.s. 
This implies that 
$Y$ is not collapsible to a graph in at most $k$ steps, a.a.s.
\end{proof}

\begin{example} {\rm  Consider the surface $S_k\in \LL_{k,2}$ introduced in Example \ref{ex1}. Note that $S_k\in \LL_{k,r}$ for any $r\ge 2$. The numbers of vertices $v_k$ and faces $f_k$ of $S_k$ satisfy
the recurrence relations
\begin{eqnarray}\label{musk}v_k=2\cdot v_{k-1}\quad \mbox{and}\quad f_k = v_{k-1}+f_{k-1}.\end{eqnarray}
Indeed, viewing $S_{k-1}$ as a subcomplex of $S_k$, we see that all vertices of $S_{k-1}$ lie on the boundary, and each edge of the boundary of $S_{k-1}$ adds a vertex to $S_k$.  This explains the first equation. For the second, note that the number of new triangles
in $S_k$ is equal to the number of edges on $\partial S_{k-1}$. 

Since $v_0=3$ and $f_0=1$, solving the recurrence relations \eqref{musk} yields 
$$v_k = 3\cdot 2^{k} \quad \mbox{and}\quad f_k = 3\cdot 2^{k}-2. $$
Consequently, 
$$\mu(S_k)= 1+ \frac{1}{3\cdot 2^{k-1} -1}.$$
}
\end{example}

\begin{lemma}\label{star}
The surface $S_k$ is balanced, and hence $$\tilde \mu(S_k) = \mu(S_k)= 1+ \frac{1}{3\cdot 2^{k-1} -1}.$$
\end{lemma}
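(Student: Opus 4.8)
The plan is to recognize $S_k$ as a triangulated disk and then invoke the fact (quoted just above from \cite{CFK}) that triangulated surfaces are balanced; since the value $\mu(S_k)=1+\frac{1}{3\cdot 2^{k-1}-1}$ has already been computed in the preceding Example, only balancedness remains to be shown.

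First I would establish, by induction on $k$, that $S_k$ is a triangulation of a $2$-disk in which \emph{every} vertex lies on the boundary circle, i.e.\ a triangulated polygon. The base case $S_0$ is a single triangle. For the inductive step, passing from $S_{k-1}$ to $S_k$ attaches one triangle (an ``ear'') along each boundary edge $uv$ of $S_{k-1}$ using a \emph{fresh} apex vertex $w$; this replaces the boundary edge $uv$ by the two edges $uw,wv$ and keeps every previous vertex on the boundary. Hence $S_k$ is again a polygon triangulation whose boundary is a single circle carrying all $v_k=3\cdot 2^{k}$ vertices, and whose dual graph (triangles as nodes, common edges as arcs) is a tree. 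In particular $S_k$ is a genuine triangulated surface with boundary: every edge meets at most two triangles (as $S_k\in\LL_{k,2}$) and every vertex link is an arc. Granting that triangulated surfaces are balanced, this already yields $\tilde\mu(S_k)=\mu(S_k)$, and the displayed value follows from the Example.

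If one prefers a self-contained argument that avoids the boundary version of the quoted result, I would instead prove $\mu(S')\ge \mu(S_k)=N/(N-2)$ directly for every nonempty subcomplex $S'$, where $N=3\cdot 2^{k}$; it suffices to treat pure $S'$. The tree structure of the dual is the main tool. For a pure $S'$ with $f'$ triangles, the edges shared by two triangles of $S'$ form the induced subforest of the dual tree, so their number equals $f'-c$, where $c$ is the number of dual-connected components of $S'$; counting edges and using $\chi(S')=v'-e'+f'$ gives the identity $v'=\chi(S')+f'+c$, i.e.\ $\mu(S')=1+\frac{\chi(S')+c}{f'}$. Each dual-connected component is itself a subtree of the dual tree, hence a triangulated sub-polygon with $f_j=v_j-2$ triangles (no vertex can be interior to a component, because all vertices of $S_k$ lie on $\partial S_k$). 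For dual-connected $S'$ this already gives $\mu(S')=v'/(v'-2)\ge N/(N-2)$, since $x\mapsto x/(x-2)$ is decreasing and $v'\le N$; thus $S_k$ minimizes $\mu$ among dual-connected subcomplexes.

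The remaining, and genuinely hardest, step is the dual-disconnected case. Writing $V=\sum_j v_j$ and letting $J'=\sum_w(\mu_w-1)$ record the total multiplicity by which vertices are shared between distinct components, one gets $f'=V-2c$ and $v'=V-J'$, so the target inequality $\mu(S')\ge N/(N-2)$ reduces to $2(cN-V)\ge (N-2)J'$. When the components are vertex-disjoint ($J'=0$) this is immediate from $V\le cN$. The obstacle is to control $J'$: I expect the decisive input to be a planarity estimate showing that interior-disjoint sub-polygons of a convex $N$-gon triangulation cannot share too many vertices (each such pair meeting essentially in the two endpoints of a separating diagonal), which bounds $J'$ and closes the inequality. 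Quantifying this vertex-sharing bound is where the real work concentrates; everything else is bookkeeping with the dual tree and the identity for $\mu(S')$.
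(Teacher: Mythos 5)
There is a genuine gap, on both of the routes you sketch. Your primary route rests on invoking the quoted fact that ``any triangulated surface is balanced'' for the disk $S_k$. But that fact cannot apply to surfaces with boundary: the example immediately following it in the paper explicitly constructs \emph{unbalanced} triangulated $2$-disks (attach an ear to any triangulated disk with at least three interior vertices, so that $\mu<1$ before the attachment). So the cited result must concern closed surfaces, and recognizing $S_k$ as a triangulated polygon does not by itself let you quote it. Your fallback route is set up correctly --- the identity $e'=2f'+c$, hence $\mu(S')=1+\frac{\chi(S')+c}{f'}$, and the dual-connected case $\mu(S')=v'/(v'-2)\ge N/(N-2)$ are all fine --- but the dual-disconnected case is exactly where the difficulty sits, and you leave it as an expectation rather than a proof. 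Concretely, after using $V=f'+2c\le (N-2)+2c$, your target inequality $2(cN-V)\ge (N-2)J'$ reduces to the integer bound $J'\le 2(c-1)$, i.e.\ a bound on the first Betti number of the union of components. Establishing this requires real work: for instance, one must first show that two distinct dual-components (which are interior-disjoint convex polygons inscribed in a circle, once the triangulation is realized with vertices in convex position) can share at most \emph{one} vertex --- note that your parenthetical ``meeting in the two endpoints of a separating diagonal'' cannot occur, since sharing both endpoints of a triangulation edge forces the components to be dual-adjacent --- and then carry out a planarity/Euler count on the resulting contact structure. None of this is supplied, and without it the lemma is not proved.

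For comparison, the paper avoids all of this by exploiting the recursive, layered construction of $S_k$ rather than its polygon structure: it proves by induction on $k$ that every pure subcomplex $S\subset S_k$ satisfies $v_k-v(S)\le f_k-f(S)$, splitting $S$ into its pure intersection with $S_{k-1}$ and the outer layer, where the new vertices are in bijection with the new faces (each new triangle has a private apex, so a missing apex forces a missing face). The inequality $\mu(S)\ge\mu(S_k)$ then follows from a one-line algebraic manipulation using $\mu(S_k)\ge 1$. If you want to salvage your approach, either complete the $J'\le 2(c-1)$ bound as a standalone lemma about triangulated polygons, or switch to the layer-by-layer induction, which is both shorter and specific to the structure actually available.
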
 
\begin{proof} 
Let $S$ be a pure subcomplex of $S_k$ with $v=v(S)$ vertices and $f=f(S)$ faces.  Write $v=v_k-m$ and $f=f_k-n$, where 
$v_k$ and $f_k$ are as above and
$m$ and $n$ are the number of vertices and faces which are in $S_k$, but not in $S$.  We claim that $m=v_k-v \le f_k-f =n$.  This assertion is established by induction.

The case $k=0$ is trivial.  So assume inductively that for any $i<k$ and $S'
\subset S_i$  a pure subcomplex, we have $v(S_i)-v(S') \le f(S_i)-f(S')$.

For a pure subcomplex $S \subset S_k$ as above, let $S'$ be the pure part of $S\cap S_{k-1}$. 
Then, $m=m'+m''$ and $n=n'+n''$, where $v(S')=v_{k-1}-m'$, $f(S')=f_{k-1}-n'$, $m''$ is the number of vertices in $S_k\smallsetminus S_{k-1}$ which are not in $S$, and $n''$ is the number of faces in $S_k\smallsetminus S_{k-1}$ which are not in $S$.

We have $m'\le n'$ by induction.  Observe that the vertices of $S_k\smallsetminus S_{k-1}$ are in one-to-one correspondence with the faces of $S_k\smallsetminus S_{k-1}$.  If such a vertex is not in $S$, then the corresponding face cannot be in $S$ either.  Consequently, $m''=n''$, and $m=m'+m''\le n'+n''=n$, completing the proof of the claim.

It follows immediately that $\mu(S)\ge\mu(S_k)=\mu_k$.  Indeed, \[ \frac{v}{f}-\frac{v_k}{f_k}=\frac{v_k-m}{f_k-n}-\frac{v_k}{f_k}=\frac{nv_k-mf_k}{f_k(f_k-n)}=
\frac{\mu_k
n - m}{f_k-n} \ge \frac{n-m}{f_k-n}\ge 0.
\]
Thus, $S_k$ is balanced.
\end{proof}

{F}rom Lemmas 
\ref{nonclosed} and \ref{star} we obtain:

\begin{corollary}\label{cor23} For any $r\ge 2$ and $k\ge 0$, one has the following inequalities:
\[
1+ \frac{1}{3\cdot 2^{k-1} -1}\, \le \, \tilde \mu_{k,r} \, \le\,  1+ \frac{2}{k+1}.
\]
\end{corollary}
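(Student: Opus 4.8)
The plan is to read Corollary \ref{cor23} as a straightforward combination of the two preceding lemmas, with no new work required beyond invoking them at the correct complexes. Recall that $\tilde\mu_{k,r}$ is defined as the maximum of $\tilde\mu(S)$ over all $S\in\LL_{k,r}$, so to establish the upper bound I need only observe that \emph{every} $S\in\LL_{k,r}$ satisfies $\tilde\mu(S)\le 1+\frac{2}{k+1}$, which is exactly the content of Lemma \ref{nonclosed}. Since this inequality holds uniformly over the (finite) list $\LL_{k,r}$, taking the maximum preserves it, giving $\tilde\mu_{k,r}\le 1+\frac{2}{k+1}$ immediately.

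For the lower bound I would exhibit a single witness in the list achieving the claimed value. The surface $S_k$ of Example \ref{ex1} lies in $\LL_{k,2}$, and by the inclusion $\LL_{k,2}\subset\LL_{k,r}$ (noted in the text as $\LL_{k,r}\subset\LL_{k,r+1}$) it lies in $\LL_{k,r}$ for every $r\ge 2$. By Lemma \ref{star} we have $\tilde\mu(S_k)=\mu(S_k)=1+\frac{1}{3\cdot 2^{k-1}-1}$. Since $\tilde\mu_{k,r}$ is a maximum over the list and $S_k$ is one member, it follows that $\tilde\mu_{k,r}\ge\tilde\mu(S_k)=1+\frac{1}{3\cdot 2^{k-1}-1}$. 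Chaining the two bounds yields the corollary.

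The only point that requires a moment's care, rather than being purely mechanical, is checking that the case boundaries line up, in particular the value $k=0$. Here the lower expression $1+\frac{1}{3\cdot 2^{-1}-1}=1+\frac{1}{1/2}=3$ should be compared against the list $\LL_{0,r}=\{\sigma_\ast\}$, a single triangle with $v=3$, $f=1$, so that $\tilde\mu=3$; the upper bound $1+\frac{2}{0+1}=3$ also equals $3$, and all three quantities coincide. For $k\ge 1$ the bounds are strict and the chain is a genuine sandwich. Thus I expect no real obstacle: the proof is a two-line citation of Lemmas \ref{nonclosed} and \ref{star} together with the membership $S_k\in\LL_{k,r}$, and the main thing to verify is simply that I have matched each inequality to the lemma that supplies it and that the degenerate $k=0$ endpoint behaves as claimed.
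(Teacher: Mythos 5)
Your proposal is correct and matches the paper's proof exactly: the paper derives Corollary \ref{cor23} by the same two-line citation, taking the upper bound from Lemma \ref{nonclosed} applied uniformly over $\LL_{k,r}$ and the lower bound from the witness $S_k\in\LL_{k,2}\subset\LL_{k,r}$ via Lemma \ref{star}. Your additional check of the $k=0$ endpoint is a harmless (and correct) sanity verification.
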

Note that the obtained upper and lower bounds for $\tilde \mu_{k,r}$ are independent of $r$. 

We believe that $\tilde\mu_{k,r}=1+  1/(3\cdot 2^{k-1} -1)$.


\begin{proof}[Proof of Theorem \ref{main}] 
The main theorem is now an immediate consequence of Theorem \ref{thm21} and Corollary \ref{cor23}:

(a) Assume that $p \ll n^{-1-2/(k+1)}$ for some $k \ge 1$. According to Corollary \ref{cor23},  
$\tilde \mu_{k,r} \le 1 + 2/(k+1).$
Choosing $r= \mbox{max} (2,k)$, it then follows from Theorem \ref{thm21} (a)    
that $Y\in G(\Delta_n^{(2)}, p)$ is collapsible to a graph in at most $k$ steps, a.a.s. 

(b) Assume that $p \gg  n^{-1-1/(3\cdot 2^{k-1} - 1)}$ for some $k \ge 1$. Then 
by Theorem \ref{tilda} and Lemma \ref{star} the surface $S_k$ (see Example \ref{ex1}) embeds into $Y$, a.a.s.
Since $S_k$ cannot be collapsed to a graph in $k$ or fewer steps we obtain that $Y$ is not collapsible to a graph in $k$ or fewer steps. 
\end{proof}

\newcommand{\arxiv}[1]{{\texttt{\href{http://arxiv.org/abs/#1}{{arXiv:#1}}}}}

\newcommand{\MRh}[1]{\href{http://www.ams.org/mathscinet-getitem?mr=#1}{MR#1}}

\vskip 2cm

Daniel C. Cohen

Department of Mathematics

Louisiana State University

Baton Rouge, LA 70803 USA

cohen@math.lsu.edu

www.math.lsu.edu/$\sim$cohen
\vskip 1cm

Michael Farber

Department of Mathematical Sciences

Durham University

Durham, DH1 3LE, UK

Michael.farber@durham.ac.uk

http://maths.dur.ac.uk/$\sim$dma0mf/

\vskip 1cm

Thomas Kappeler

Mathematical Insitutte 

University of Zurich

Winterthurerstrasse 190, CH-8057 

Zurich, Switzerland

thomas.kappeler@math.uzh.ch




\begin{thebibliography}{GKM00}



\bibitem[AS00]{AS} N.\ Alon, J.\ Spencer, {\it The Probabilistic Method}, Third edition, 
Wiley-Intersci. Ser. Discrete Math. Optim., John Wiley \& Sons, Inc., Hoboken, NJ, 2008.
\MRh{2437651}

\bibitem[BHK08]{BHK} E.\ Babson, C.\ Hoffman, M.\ Kahle, 
{\it The fundamental group of random $2$-complexes}, preprint 2008. 
\arxiv{0711.2704}

\bibitem[Bol08]{B} B. Bollob\'{a}s, \textit{Random Graphs}, Second edition, Cambridge University Press, 2008.
Cambridge Stud. Adv. Math., 73, Cambridge, 2001.
\MRh{1864966}

\bibitem[CFK10]{CFK} A. Costa, M. Farber, T. Kappeler, Topology of random 2-complexes, preprint 2010.


\bibitem[ER60]{ER} P.\ Erd\H{o}s, A.\ R\'enyi, {On the evolution of 
random graphs}, Publ.\ Math.\ Inst.\ Hungar.\ Acad.\ Sci.\ {\bf 5}
(1960), 17--61.
\MRh{0125031}

\bibitem[HMS93]{Hog} C. Hog-Angeloni, W. Metzler, A. Sieradski, \textit{Two-dimensional homotopy and combinatorial group theory}, London Math. Soc. Lecture Note Ser., 197, Cambridge University Press, Cambridge, 1993.
\MRh{1279174}

\bibitem[J{\L}R00]{JLR} S. Janson, T. {\L}uczak, A. Ruci\'nski, \textit{Random graphs}, Wiley-Intersci. Ser. Discrete Math. Optim., Wiley-Interscience, New York, 2000.
\MRh{1782847}

\bibitem[Koz09]{Ko} D. Kozlov, \textit{The threshold function for vanishing of the~top homology group of random $d$-complexes},
preprint 2009.\\  \arxiv{0904.1652}.

\bibitem[LM06]{LM} N.\ Linial, R.\ Meshulam, {Homological connectivity
  of random $2$-complexes}, Combinatorica {\bf 26} (2006),  475--487.
\MRh{2260850}

\bibitem[MW09]{MW} R.\ Meshulam, N.\ Wallach, {Homological
  connectivity of random $k$-complexes}, Random Structures \& Algorithms 
  \textbf{34} (2009), 408--417. 
  \MRh{2504405}  
\end{thebibliography}
\end{document}